\newcommand\bG{{\mathbf G}}
\newcommand\bL{{\mathbf L}}
\newcommand\bS{{\mathbf S}}
\newcommand\bT{{\mathbf T}}
\newcommand\bB{{\mathbf B}}
\newcommand\bU{{\mathbf U}}
\newcommand\bH{{\mathbf H}}
\newcommand\BC{{\mathbb C}}
\newcommand\BF{{\mathbb F}}
\newcommand\BZ{{\mathbb Z}}
\newcommand\BQ{{\mathbb Q}}
\newcommand\CC{{\mathcal C}}
\newcommand\CE{{\mathcal E}}
\newcommand\inv{^{-1}}
\newcommand\Id{\mathrm{Id}}
\DeclareMathOperator\Ind{\mathrm{Ind}}
\DeclareMathOperator\Irr{\mathrm{Irr}}
\DeclareMathOperator\Res{\mathrm{Res}}
\DeclareMathOperator\Sh{\mathrm{Sh}}
\DeclareMathOperator\diag{\mathrm{diag}}
\DeclareMathOperator\SL{\mathrm{SL}}
\DeclareMathOperator\SU{\mathrm{SU}}
\DeclareMathOperator\GL{\mathrm{GL}}
\DeclareMathOperator\PGL{\mathrm{PGL}}
\DeclareMathOperator\ad{\mathrm{ad}}
\DeclareMathOperator\rank{{rank}}
\newtheorem{assumption}[equation]{Assumption}
\newtheorem{lemma}[equation]{Lemma}
\newtheorem{definition}[equation]{Definition}
\newtheorem{theorem}[equation]{Theorem}
\newtheorem{proposition}[equation]{Proposition}
\newtheorem{conjecture}[equation]{Conjecture}
\theoremstyle{remark}
\newcommand\scal[2]{\langle#1,#2\rangle}
\newcommand\genby[1]{\langle#1\rangle}
\newcommand\fS{{\mathfrak S}}
\newcommand\Fqbar{{\overline\BF_q}}
\newcommand\Fq{{\BF_q}}
\newcommand\GF{{\bG^F}}
\newcommand\UF{{\bU^F}}
\newcommand\HF{{\bH^F}}
\newcommand\TF{{\bT^F}}
\newcommand\Gd{{\bG^*}}
\newcommand\Gtd{{{\tilde\bG}^*}}
\newcommand\Td{{\bT^*}}
\newcommand\GdFd{{\bG^{*F^*}}}
\newcommand\RTGd{{R^\bG_\Td}}
\newcommand\Fd{{F^*}}
\newcommand\conn{^0}
\newcommand\CGdFs{{C_\Gd(s)^\Fd}}
\newcommand\lexp[2]{\kern\scriptspace\vphantom{#2}^{#1}\kern-\scriptspace#2}
\newcommand\xdownarrow[1]{\Big\downarrow\rlap{$\vcenter{\hbox{$\scriptstyle#1$}}$}}
\newcommand\ve\varepsilon
\newcommand\Fe{F_\ve}
\author{Fran\c cois Digne and Jean Michel}
\title{Commutation of Shintani descent and Jordan decomposition}
\begin{document}
\dedicatory{Dedicated to the memory of T.A. Springer}
\begin{abstract}
Let ${\mathbf G}^F$ be a finite group of Lie type, where ${\mathbf G}$ is a reductive group
defined  over  ${\overline{\mathbb F}_q}$  and  $F$  is  a  Frobenius  root. Lusztig's Jordan
decomposition  parametrises the irreducible characters in a rational series
${\mathcal E}({{\mathbf G}^F},(s)_{{\mathbf G}^{*F^*}})$ where $s\in{{\mathbf G}^{*F^*}}$ by the series ${\mathcal E}(C_{{\mathbf G}^*}(s)^{F^*},1)$.
We conjecture  that  the  Shintani  twisting preserves the space of
class functions generated by the union of the ${\mathcal E}({{\mathbf G}^F},(s')_{{\mathbf G}^{*F^*}})$ where
$(s')_{{\mathbf G}^{*F^*}}$ runs
over  the semi-simple  classes of  ${{\mathbf G}^{*F^*}}$ geometrically  conjugate to $s$;
further,  extending the Jordan decomposition by linearity to this space, we
conjecture  that  there is a way to fix Jordan  decomposition  such that it
maps  the Shintani twisting  to the Shintani
twisting  on disconnected  groups defined  by Deshpande, which acts  on the
linear span of $\coprod_{s'}{\mathcal E}(C_{{\mathbf G}^*}(s')^{F^*},1)$. We show a non-trivial
case of this conjecture, the case where ${\mathbf G}$ is of type $A_{n-1}$
with $n$ prime.
\end{abstract}
\maketitle
\section{Deshpande's approach to Shintani descent}\label{Deshpande}
We follow \cite{deshpande}.
Let $\bH$ be an algebraic group over an algebraically closed field $k$; we
identify $\bH$ to its points $\bH(k)$ over $k$. Let
$\gamma_1, \gamma_2$ be two commuting bijective
isogenies on $\bH$.  We define the following subset of
$(\bH\rtimes\genby{\gamma_1})\times(\bH\rtimes\genby{\gamma_2})$:
$$R_{\gamma_1,\gamma_2}=\{(x\gamma_1,y\gamma_2)\mid x,y\in\bH,
[x\gamma_1,y\gamma_2]=1\}$$
where $[u,v]$ is the commutator $uvu\inv v\inv$.
A matrix $\begin{pmatrix}a&b\\c&d\end{pmatrix}\in\GL_2(\BZ)$ defines a map
 $$R_{\gamma_1,\gamma_2}\xrightarrow{\begin{pmatrix}a&b\\c&d\end{pmatrix}}
   R_{\gamma_1^a\gamma_2^c,\gamma_1^b\gamma_2^d}:
(x\gamma_1,y\gamma_2)\mapsto((x\gamma_1)^a(y\gamma_2)^c,
(x\gamma_1)^b(y\gamma_2)^d)$$
There is an action of $\bH$ on $R_{\gamma_1,\gamma_2}$ by simultaneous
conjugation: $(x\gamma_1,y\gamma_2)\xrightarrow{\ad
g}(gx\gamma_1g\inv,gy\gamma_2g\inv)$, which commutes with the maps
$\begin{pmatrix}a&b\\c&d\end{pmatrix}$, and we denote by
$R_{\gamma_1,\gamma_2}/\sim\bH$ the space of orbits under this action.

Assume  now that  $\bH$ is  connected and  $\gamma_1$ is  a Frobenius root
(an isogeny such that some finite power is a Frobenius morphism).
Then,  by the Lang-Steinberg theorem any  $x\gamma_1\in\bH\gamma_1$
is $\bH$-conjugate to $\gamma_1$,
and   we  can  take  as  representatives  of  the  $\bH$-orbits  pairs
of the form $(\gamma_1,y\gamma_2)$; on these pairs there is only an action of
the fixator of $\gamma_1$, that is $\bH^{\gamma_1}$. Further, the condition
$[\gamma_1,y\gamma_2]=1$  is equivalent to  $y\in\bH^{\gamma_1}$ (since
$\gamma_1$ and $\gamma_2$ commute).  We can thus
interpret $R_{\gamma_1,\gamma_2}$ as the $\bH^{\gamma_1}$-conjugacy classes
on the coset $\bH^{\gamma_1}\gamma_2$, which we denote by
$\bH^{\gamma_1}\gamma_2/\sim\bH^{\gamma_1}$. If $a$, $c$ and $\gamma_2$
are such that $\gamma_1^a\gamma_2^c$ is still a Frobenius root, we can thus
interpret the map given by $\begin{pmatrix}a&b\\c&d\end{pmatrix}$ as
a map $\bH^{\gamma_1}\gamma_2/\sim\bH^{\gamma_1}\to
\bH^{\gamma_1^a\gamma_2^c}\gamma_1^b\gamma_2^d/\sim\bH^{\gamma_1^a\gamma_2^c}$,
a ``generalised Shintani descent''.

We are interested here in the case of $R_{F,\Id}$ where $F$ is a Frobenius
root. The matrix $\begin{pmatrix}1&0\\ 1&1\end{pmatrix}$ defines a
map, that we call the Shintani twisting and will just denote by 
$\Sh: R_{F,\Id}\to R_{F,\Id}:
(xF,y)\mapsto(xFy,y)$. With the identification above of $R_{F,\Id}/\sim\bH$
with the pairs $(F,y)$ which identifies it with $\HF/\sim\HF$,  if we write
$y=\lambda.\lexp F\lambda\inv$ using the Lang-Steinberg theorem, $\Sh$ maps $(F,y)$ to
$(Fy,y)=(F\lambda.\lexp F\lambda\inv,\lambda.\lexp F\lambda\inv)$
which is conjugate by $\lexp F\lambda\inv$ to $(F,\lexp F\lambda\inv\lambda)$,
thus we recover the usual definition of the Shintani twisting $\Sh$ as being
induced by the
correspondence $\lambda\lexp F\lambda\inv\mapsto\lexp F\lambda\inv\lambda$.

The  advantage of Deshpande's  approach is that  the map $\Sh: R_{F,\Id}\to
R_{F,\Id}$  still makes  sense when  $\bH$ is  disconnected; this  time the
interpretation   of  $R_{F,\Id}/\sim\bH$  is  different:  the
$\bH$-orbits  on  $\bH  F$  are  parametrised by $H^1(F,\bH/\bH\conn)$. If
$\sigma\in\bH$  is a representative of such an $\bH$-orbit,
the $\bH$-orbits  of pairs  $(\sigma F,y)$
are in  bijection with  $\bH^{\sigma F}/\sim
\bH^{\sigma  F}$, so  we see  that we  must consider  all rational forms of
$\bH$ corresponding to the various representatives $\sigma
F\in H^1(F,\bH/\bH\conn)$ together, and $\Sh$
will act on the disjoint union of the conjugacy classes of each of these
forms. Let $\sigma F$ be such a representative, let $\bH_1$ be a $\sigma
F$-stable coset of $\bH\conn$ in $\bH$, let $\sigma'\in\bH_1^{\sigma F}$
and let us compute the image by $\Sh$ of a commuting pair $(\sigma F,\sigma' y)$
where $y\in\bH\conn$ (thus $y\in(\bH\conn)^{\sigma F}$).
Let us write using the Lang-Steinberg
theorem $y=\lambda.\lexp{\sigma'\sigma F}\lambda\inv$. Then $\Sh$ maps
$(\sigma F,\sigma' y)$ to $(\sigma F\sigma' y,\sigma' y)=
(\sigma'\sigma F\lambda.\lexp{\sigma'\sigma F}\lambda\inv,
\sigma'\lambda.\lexp{\sigma'\sigma F}\lambda\inv)$ (using
$[\sigma F,\sigma']=1$) which is conjugate by $\lexp{\sigma'\sigma F}\lambda\inv$
to $(\sigma'\sigma F,\sigma'.\lexp{\sigma F}\lambda\inv\lambda)$. Thus 
\begin{proposition}\label{Sh}
$\Sh$ is induced on $\sigma'(\bH\conn)^{\sigma F}$, where
$\sigma$ and $\sigma'$ are two representatives of $\bH/\bH\conn$ such that
 $\sigma F$ and $\sigma'$ commute,
by the correspondence between the element
$\sigma'\lambda.\lexp{\sigma'\sigma F}\lambda\inv$ of
$\sigma'(\bH\conn)^{\sigma F}$
and the element $\sigma'.\lexp{\sigma F}\lambda\inv\lambda$ of
$\sigma'(\bH\conn)^{\sigma'\sigma F}$.
\end{proposition}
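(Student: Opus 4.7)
The plan is to perform a direct computation in the semidirect product $\bH\rtimes\genby F$, paralleling the connected-case derivation that precedes the statement and keeping the disconnected-component representative $\sigma'$ in play throughout.

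I would start by applying the defining formula $\Sh(xF,y)=(xFy,y)$ with $xF=\sigma F$ and the second entry $\sigma' y$, producing $(\sigma F\cdot\sigma' y,\sigma' y)$. The commutation $[\sigma F,\sigma']=1$ lets me rewrite the first coordinate as $\sigma'\sigma F\cdot y$. Invoking Lang-Steinberg for the Frobenius root $\sigma'\sigma F$ restricted to $\bH\conn$, I write $y=\lambda\cdot\lexp{\sigma'\sigma F}\lambda\inv$ with $\lambda\in\bH\conn$; substituting into the pair gives $(\sigma'\sigma F\lambda\cdot\lexp{\sigma'\sigma F}\lambda\inv,\,\sigma'\lambda\cdot\lexp{\sigma'\sigma F}\lambda\inv)$.

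The final step is simultaneous conjugation by $g:=\lexp{\sigma'\sigma F}\lambda\inv$. The first coordinate telescopes to $\sigma'\sigma F$ by standard manipulation. For the second coordinate, the decisive identity is $\lexp{\sigma'\sigma F}\lambda\inv\cdot\sigma'=\sigma'\cdot\lexp{\sigma F}\lambda\inv$, which follows from $(\sigma'\sigma F)\inv\sigma'=(\sigma F)\inv$ (using $[\sigma F,\sigma']=1$) and converts the internal twist from $\sigma'\sigma F$ down to $\sigma F$; the second coordinate then becomes $\sigma'\cdot\lexp{\sigma F}\lambda\inv\lambda$, which is the correspondence announced in the statement.

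I expect the overall argument to be essentially routine, the only delicate point being the twist-reduction identity above, which is what produces the asymmetric final twist $\lexp{\sigma F}$ rather than $\lexp{\sigma'\sigma F}$. One should additionally verify that $\lexp{\sigma F}\lambda\inv\lambda$ is fixed by $\sigma'\sigma F$ so that the image indeed lies in $\sigma'(\bH\conn)^{\sigma'\sigma F}$; this is a short check using the $\sigma F$-fixity of $y$ together with $[\sigma F,\sigma']=1$, namely $\lexp{\sigma'\sigma F}(\lexp{\sigma F}\lambda\inv\lambda)=\lexp{\sigma F}(\lexp{\sigma'\sigma F}\lambda\inv)\cdot\lexp{\sigma'\sigma F}\lambda$ collapses to $\lexp{\sigma F}\lambda\inv\lambda$ after substituting $\lexp{\sigma'\sigma F}\lambda\inv=\lambda\inv y$.
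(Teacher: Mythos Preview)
Your proposal is correct and follows exactly the paper's argument: the paper's proof is the short paragraph immediately preceding the proposition, which applies $\Sh$ to $(\sigma F,\sigma' y)$, uses $[\sigma F,\sigma']=1$ and the Lang--Steinberg decomposition $y=\lambda\cdot\lexp{\sigma'\sigma F}\lambda\inv$, and then conjugates by $\lexp{\sigma'\sigma F}\lambda\inv$ to reach $(\sigma'\sigma F,\sigma'\cdot\lexp{\sigma F}\lambda\inv\lambda)$. Your write-up is slightly more explicit---spelling out the identity $\lexp{\sigma'\sigma F}\lambda\inv\cdot\sigma'=\sigma'\cdot\lexp{\sigma F}\lambda\inv$ and checking that the image lies in $(\bH\conn)^{\sigma'\sigma F}$---but the substance is identical.
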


Assume now that $\bH/\bH\conn$ is commutative
and that we can lift all elements of $\bH/\bH\conn$ to commuting
representatives, such that $F$-stable elements lift to $F$-stable
representatives; then we can lift all pairs $(\sigma F,\sigma')$ as in 
Proposition \ref{Sh} which commute in $\bH/\bH\conn$ to commuting pairs,
thus we can see $\Sh$ as a linear map
on the space $\CC(\bH,F):=\oplus_{\sigma\in H^1(F,\bH/\bH\conn)}
\CC(\bH^{\sigma F})$, the direct sum of the spaces of class functions on
the various rational forms $\bH^{\sigma F}$. 

\begin{definition} Let $\bH$ be a algebraic group with a Frobenius
root $F$, then we say
that $H$ satisfies condition (*) if we can lift elements of $\bH/\bH\conn$
to commuting elements of $\bH$, such that $F$-stable elements of $\bH/\bH\conn$
lift to $F$-stable elements.
\end{definition}

Beware that $\Sh$ on class functions is defined by
$\Sh(f)(x)=f(\Sh(x))$ thus it maps class functions on
$\sigma'(\bH\conn)^{\sigma'\sigma F}$ to class functions on
$\sigma'(\bH\conn)^{\sigma F}$.

Note  finally that  when $\bH\conn$  is trivial  and $F$  acts trivially on
$\bH/\bH\conn$,  the computations  of this  section recover  the well-known
action  of  $\GL_2(\BZ)$  on  the  Drinfeld  double  of  the  finite  group
$\bH/\bH\conn=\bH$. See, for example \cite [8.4.2]{Broue}.
\section{Conjectures}
\begin{definition}\label{RTG non connected}
We extend the definition of Deligne-Lusztig characters to a disconnected
reductive group $\bH$ with a Frobenius root $F$ by $R_\bT^\bH(s):=
\Ind_{\bH^{0F}}^\HF R_\bT^{\bH^0}(s)$
where $\bT$ runs over $F$-stable maximal
 tori of $\bH$ and $s$ over $\Td^\Fd$. 
We call unipotent characters the irreducible components of the $R_\bT^\bH(1)$ 
and we denote by $\CE(\HF,1)$ the set of unipotent characters  of $\HF$.
\end{definition}

From now on, $\bG$ will be a connected reductive group with a Frobenius root
$F$ and $s$ an $\Fd$-stable semi-simple element
of $\Gd$, the group dual to $\bG$. We denote by 
$\CE(\GF,(s)_\GdFd)$ the $\GdFd$-Lusztig series
associated with $s$; this series consists of
the irreducible components of the $\RTGd(s)$ where $\Td$ runs over $F$-stable
maximal tori of $C_\Gd(s)$. We denote by 
$\CE(\GF,(s))$ the geometric Lusztig series associated with $s$, that is the
union of the series $\CE(\GF,(s')_\GdFd)$ where $(s')_\GdFd$ runs over the 
$\GdFd$-classes in the geometric class of $s$.

Let  $\varepsilon_\bG:=(-1)^{F-\rank\bG}$.  Lusztig's  Jordan  decomposition of
characters  in a reductive group with a not necessarily connected centre is
given   by   the   following   theorem   (see   for  example  \cite[Theorem
11.5.1]{book2}).
\begin{theorem}\label{Jordan decomposition} Let $\bG$ be a connected reductive group with
Frobenius root $F$ and $(\Gd,\Fd)$ be dual to $\bG$;
for any semi-simple element $s\in\GdFd$, there is a bijection from
$\CE(\GF,(s)_\GdFd)$ to $\CE(\CGdFs,1)$. This bijection may
be chosen such that, extended by linearity to virtual characters, it sends
$\varepsilon_\bG \RTGd(s)$  to
$\varepsilon_{C_\Gd(s)^0}R_\Td^{C_\Gd(s)}(1)$ for any $\Fd$-stable
maximal torus $\Td$ of $C_\Gd(s)$.
\end{theorem}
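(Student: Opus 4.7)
The plan is to reduce the theorem to the case of connected centre. Let $\bG\hookrightarrow\tilde\bG$ be a regular embedding, so $\tilde\bG$ is a connected reductive group with connected centre, the same derived group as $\bG$, and a Frobenius still denoted $F$. Dually one gets a surjection $\pi:\tilde\bG^*\twoheadrightarrow\Gd$ whose kernel is a central $\Fd$-stable torus. Choose a lift $\tilde s\in\tilde\bG^{*\Fd}$ of $s$; since $\tilde\bG$ has connected centre, $C_{\tilde\bG^*}(\tilde s)$ is connected, and $\pi$ restricts to a surjection $C_{\tilde\bG^*}(\tilde s)\twoheadrightarrow\CGdFos$ with connected toral kernel.

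I then invoke the Jordan decomposition theorem in the connected-centre case, which is proved independently (via Harish-Chandra theory, the classification of unipotent characters, and scalar-product computations for Deligne-Lusztig characters). This gives a bijection $\tilde J:\CE(\tilde\bG^F,(\tilde s)_{\tilde\bG^{*\Fd}})\to\CE(C_{\tilde\bG^*}(\tilde s)^\Fd,1)$ with $\tilde J(\varepsilon_{\tilde\bG}R^{\tilde\bG}_{\tilde\bT^*}(\tilde s))=\varepsilon_{C_{\tilde\bG^*}(\tilde s)}R^{C_{\tilde\bG^*}(\tilde s)}_{\tilde\bT^*}(1)$ for every $\Fd$-stable maximal torus. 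The descent from $\tilde\bG$ to $\bG$ is Clifford-theoretic. On one hand, $\tilde\bG^F/\GF$ is abelian and acts on $\CE(\tilde\bG^F,(\tilde s))$ by conjugation, its dual acts on $\CE(\GF,(s)_\GdFd)$ by tensoring with linear characters of $\tilde\bG^F/\GF$, and restriction matches orbits on both sides. On the other hand, by Definition \ref{RTG non connected} the set $\CE(\CGdFs,1)$ is built from $\CE(\CGdFos,1)$ via the action of the component group $A:=\CGdFs/\CGdFos$. A standard Lusztig duality identifies the character group of $\tilde\bG^F/\GF$ with $A$, matching the two parametrisations and allowing $\tilde J$ to descend to a bijection $J:\CE(\GF,(s)_\GdFd)\to\CE(\CGdFs,1)$.

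The compatibility with Deligne-Lusztig characters transports through the descent: $\RTGd(s)$ is, up to a scalar coming from the kernel torus, the restriction to $\GF$ of $R^{\tilde\bG}_{\tilde\bT^*}(\tilde s)$, while $R^{C_\Gd(s)}_\Td(1)=\Ind^\CGdFs_\CGdFos R^{C_\Gd(s)^0}_\Td(1)$ by Definition \ref{RTG non connected}, and the sign equalities $\varepsilon_\bG=\varepsilon_{\tilde\bG}$ and $\varepsilon_{\CGdFos}=\varepsilon_{C_{\tilde\bG^*}(\tilde s)}$ hold since each pair differs only by a central torus. The main obstacle is the Clifford descent: one must show that $\tilde J$ can be chosen equivariantly for the dual actions of $\tilde\bG^F/\GF$ and $A$ so that the induced $J$ preserves the Deligne-Lusztig expansion simultaneously for all $\Fd$-stable maximal tori. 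This simultaneity constraint, together with the choices it involves, is precisely the source of the well-known non-canonicity of the Jordan decomposition in the disconnected-centre case.
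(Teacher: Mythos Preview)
The paper does not prove this theorem; it is stated as a known result with the citation \cite[Theorem 11.5.1]{book2} and no argument is given. Your sketch via a regular embedding $\bG\hookrightarrow\tilde\bG$, invoking the connected-centre case, and then descending by Clifford theory is precisely the strategy used in that reference, so there is no discrepancy to discuss.

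One small caution on your sketch: the statement that ``a standard Lusztig duality identifies the character group of $\tilde\bG^F/\GF$ with $A$'' is imprecise. What is true is that $\Irr(\tilde\bG^F/\GF)$ is identified with $\Ker(\tilde\bG^{*\Fd}\to\Gd^\Fd)$, and the stabiliser in this group of a given $\tilde\chi\in\CE(\tilde\bG^F,(\tilde s))$ maps onto the relevant piece of $A=C_\Gd(s)^\Fd/C_\Gd\conn(s)^\Fd$. Getting this identification exactly right, and showing that $\tilde J$ can be chosen equivariantly for it, is the substantive part of the argument in \cite{book2}; your final paragraph correctly flags this as the crux.
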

\begin{conjecture}\label{Sh preserve series}
Let $\bG$ be a connected reductive group with Frobenius root $F$. Then
$\Sh$, viewed as a linear operator on $\CC(\GF)$, preserves the subspace
spanned by $\CE(\GF,(s))$ for each geometric class $(s)$.
\end{conjecture}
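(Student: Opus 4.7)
The plan is to reduce Conjecture \ref{Sh preserve series} to a statement about how $\Sh$ acts on Deligne--Lusztig characters, exploiting the fact that, by Proposition \ref{Sh}, $\Sh$ operates on $\GF$-conjugacy classes by ``inner $\bG$-conjugation''. By Lusztig's disjointness theorem, $\scal{\RTGd(\theta)}{R_{\bT'}^\bG(\theta')}=0$ whenever $(\bT,\theta)$ and $(\bT',\theta')$ lie in distinct geometric classes, so the ``uniform'' subspaces
$U_{(s)}:=\text{span}\{\RTGd(\theta):(\bT,\theta)\in (s)_{\text{geom}}\}$
are mutually orthogonal inside $\CC(\GF)$, each contained in $V_{(s)}:=\text{span}(\CE(\GF,(s)))$. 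A first step is to show $\Sh(U_{(s)})\subseteq U_{(s)}$ for every geometric class $(s)$; the full conjecture then reduces to controlling the cuspidal complement of $U_{(s)}$ inside $V_{(s)}$, which should follow from a compatibility of $\Sh$ with Harish-Chandra induction and restriction.

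For the main step I would use Proposition \ref{Sh} to interpret $\Sh$ on conjugacy classes as conjugation by an element $\lambda\in\bG$ depending on the class: for $y=\lambda\lexp F\lambda\inv$, we have $\Sh(y)=\lexp F\lambda\inv\lambda=\lambda\inv y\lambda$. This conjugation moves the pair $(\bT,\theta)$ to $(\lambda\inv\bT\lambda,\theta\circ\ad\lambda)$, which lies in the same $\bG$-orbit of torus-character pairs --- the very $\bG$-orbit that defines a geometric class. Pointwise, then, $\Sh(\RTGd(\theta))(y)=\RTGd(\theta)(\lambda\inv y\lambda)$ looks like a Deligne--Lusztig character associated to a pair in the same geometric class as $(\bT,\theta)$, which is what we want.

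The principal obstacle is globalising this pointwise argument: since $\lambda$ depends on $y$ and the conjugated torus $\lambda\inv\bT\lambda$ is generically not $F$-stable (it is $F$-stable only when $y\in N_\bG(\bT)$), one cannot directly identify $\Sh(\RTGd(\theta))$ with a single $R_{\bT'}^\bG(\theta')$ by a naive conjugation. Making the identification precise seems to require the cohomological construction of $\RTGd(\theta)$ via Deligne--Lusztig varieties together with a careful analysis of the induced action of Shintani twisting on $\ell$-adic cohomology, so as to collect the $y$-dependent conjugations into a genuine linear combination of DL characters within the geometric class. This is precisely the kind of analysis that, thanks to the combinatorial specifics of type $A_{n-1}$ with $n$ prime (where the geometric classes on the dual side have a particularly rigid structure under $\Fd$-stable conjugation), the authors can push through in their main theorem, but that appears hard to control uniformly across all types.
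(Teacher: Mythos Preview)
The statement you are attempting to prove is labelled a \emph{Conjecture} in the paper and is not proved there in general; the paper establishes it only in special cases (the Proposition immediately following the conjecture, for connected centre in almost good characteristic or type $A$ under extra hypotheses, and later Proposition~\ref{series s only} for $\SL_n$ with $n$ prime). Your proposal is, as you yourself acknowledge, incomplete: the ``principal obstacle'' you name is genuine and you do not overcome it.

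Beyond incompleteness, the pointwise conjugation heuristic is misleading. Writing $\Sh(y)=\lambda\inv y\lambda$ and evaluating $R_\bT^\bG(\theta)$ there simply gives the value of the \emph{same} class function at the point $\Sh(y)$; no new torus--character pair is produced, and the ``conjugated torus $\lambda\inv\bT\lambda$'' plays no role since $R_\bT^\bG(\theta)$ is already a fixed class function on $\GF$. Thus even your ``first step'' that $\Sh$ preserves the uniform space $U_{(s)}$ is not established by this reasoning. The proposed reduction of the non-uniform part to compatibility of $\Sh$ with Harish--Chandra induction is likewise too optimistic: the complement of $U_{(s)}$ in $V_{(s)}$ is not in any simple sense spanned by Harish--Chandra inductions of cuspidals, so such a compatibility would not finish the argument.

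The paper's partial results use entirely different methods. For connected centre in almost good characteristic it invokes Shoji's theorems: characteristic functions of character sheaves are eigenvectors of $\Sh$, and they coincide up to scalars with almost characters; since a subset of the almost characters spans each $\BC\CE(\GF,(s))$, preservation follows. For $\SL_n$ with $n$ prime the argument is by restriction from $\GL_n$: $\Sh$ is trivial on $\GL_n^F$ and commutes with restriction to $\SL_n^F$ (Lemmas~\ref{restriction} and~\ref{quotient}), so every series with $C_{\PGL_n}(s)^F=C_{\PGL_n}\conn(s)^F$ is pointwise $\Sh$-fixed; the single remaining geometric series is then $\Sh$-stable because it is the orthogonal complement of all the others. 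Neither route passes through your torus-conjugation picture.
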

\begin{proposition}Assume that $F$ is the Frobenius
morphism corresponding to an $\Fq$-structure on $\bG$. Then
Conjecture  \ref{Sh  preserve  series}  holds when the
characteristic is almost good for $\bG$ in the sense of
\cite[1.12]{lusztig green functions} and $Z\bG$ is connected or when $\bG$ is of
type $A$ and either $q$ is large or $F$ acts trivially on $Z\bG/(Z\bG)\conn$.
\end{proposition}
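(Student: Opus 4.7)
The plan is to reduce to the unipotent case via Lusztig's Jordan decomposition and then exploit the fact that the Shintani twisting on unipotent characters is controlled by Deligne--Lusztig induction.

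First, assume $Z\bG$ is connected and the characteristic is almost good. Then $\CGdFos=\CGdFs$ is connected for every semisimple $s\in\GdFd$, so the geometric Lusztig series $\CE(\GF,(s))$ coincides with the single rational series $\CE(\GF,(s)_\GdFd)$. It therefore suffices to prove that $\Sh$ preserves every such rational series. The rational series is spanned by the Deligne--Lusztig characters $\RTGd(s)$ as $\Td$ varies over $\Fd$-stable maximal tori of $C_\Gd(s)$, so we want $\Sh(\RTGd(s))$ to lie in the span of $R^\bG_{\Td'}(s)$ for various $\Td'$.

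My plan is to deduce this compatibility via Theorem \ref{Jordan decomposition}: under the almost good hypothesis, Jordan decomposition $J_s\colon\CE(\GF,(s)_\GdFd)\to\CE(\CGdFs,1)$ is characterised (up to the sign $\ve_\bG\ve_{C_\Gd(s)}$) by its effect on the $\RTGd(s)$. I would then check that $J_s\circ\Sh_\GF\circ J_s\inv$ is again a Jordan-type bijection compatible with Deligne--Lusztig induction on the unipotent side, so by uniqueness it equals the Shintani twisting $\Sh_{\CGdFs}$ (which is well-defined on $\CGdFs$ since this is a connected reductive group). This reduces the claim to the assertion that the usual Shintani twisting preserves the span of $\CE(\HF,1)$ for $\bH$ connected reductive, which I would prove by showing that $\Sh_\HF$ fixes each uniform unipotent projector using the observation that $\Sh(g)$ is always $\bH$-conjugate to $g$ (via the Lang--Steinberg element), and that the span of the $R_\bT^\bH(1)$ is stable under any operator preserving $\bH$-geometric classes.

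For the type $A$ case with disconnected centre, I would use the standard $z$-extension $\bG\hookrightarrow\tilde\bG$ with $Z\tilde\bG$ connected and $[\tilde\bG,\tilde\bG]=[\bG,\bG]$. Every character of $\GF$ arises by Clifford restriction from $\tilde\GF$, and a direct check from the formula $\lambda\lexp F\lambda\inv\mapsto\lexp F\lambda\inv\lambda$ shows that $\Sh$ commutes with this restriction. The role of the hypothesis ``$q$ large or $F$ acts trivially on $Z\bG/(Z\bG)\conn$'' is to guarantee that the $\tilde\bG^F$-orbits on $\Irr(\GF)$ behave compatibly with the geometric Lusztig series partition: in the first regime one invokes generic vanishing of small $(s,\theta)$-coincidences, in the second the Galois action on $Z\bG/(Z\bG)\conn$ is trivial so every rational series of $\GF$ lifts. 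Applying the already-proved connected centre case to $\tilde\bG$ then yields the claim for $\GF$.

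The main obstacle will be Step 3, i.e.\ showing $J_s\circ\Sh_\GF=\Sh_{\CGdFs}\circ J_s$. This commutation is delicate because $\Sh$ does not commute with Deligne--Lusztig induction on the nose (otherwise it would be the identity, contradicting its nontriviality on, say, $\SL_n(\Fq)$ at unipotent classes with disconnected centraliser); instead one has to combine the compatibility of Lusztig induction with Jordan decomposition (from almost good characteristic) with a direct computation of $\Sh(\RTGd(s))$ in the rational series, perhaps using the ``Mellin transform'' decomposition of $\RTGd(s)$ along characters of $\Td^\Fd$. The transition in the type $A$ non-connected-centre case also requires an analysis of Clifford restriction and the induction of Lusztig series, where the hypothesis on $q$ or on the $F$-action becomes essential.
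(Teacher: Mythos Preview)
Your reduction via Jordan decomposition is circular. To form the operator $J_s\circ\Sh_\GF\circ J_s^{-1}$ on $\BC\CE(\CGdFs,1)$ you already need $\Sh_\GF$ to send $\BC\CE(\GF,(s)_\GdFd)$ into itself, which is precisely the statement to be proved. Even granting that, the step you call ``reducing to $\Sh$ preserving $\BC\CE(\HF,1)$ for connected $\bH$'' is not a reduction: that is just the $s=1$ instance of the same conjecture for $\bH$, and your proposed proof of it fails. The fact that $\Sh(g)$ is $\bH$-conjugate to $g$ only shows that $\Sh$ fixes each uniform class function (in particular each $R_\bT^\bH(1)$). It says nothing about the non-uniform part of $\BC\CE(\HF,1)$, which is in general nonzero and is exactly where $\Sh$ can act nontrivially; an operator fixing the uniform projection can still move non-uniform unipotent characters out of the unipotent series. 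So neither the $s\ne1$ case nor the $s=1$ case is handled.

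The paper proceeds by a completely different route: it invokes the theory of character sheaves. Shoji showed that characteristic functions of $F$-stable character sheaves are eigenvectors of $\Sh$, and (under the almost-good, connected-centre hypothesis, or in type $A$ with $q$ large) that these characteristic functions agree up to scalar with Lusztig's almost characters. Since for each geometric class $(s)$ the span $\BC\CE(\GF,(s))$ is also spanned by a subset of the almost characters, and $\Sh$ is diagonal in that basis, stability follows immediately. The remaining case (type $A$ with $F$ trivial on $Z\bG/(Z\bG)\conn$) is a direct citation of Bonnaf\'e. There is no elementary way known to bypass the character-sheaf input here; that is exactly why the authors state the general assertion as a conjecture rather than a theorem.
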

\begin{proof}
The case of type $A$ when $F$ acts trivially on $Z\bG/(Z\bG)\conn$
results from \cite[Th\'eor\`eme 5.5.4]{bonnafe}. Let us prove the other cases.
In \cite[Section 3.2]{Sh} it is shown
that  the characteristic functions of 
character sheaves are eigenvectors  of $\Sh$; then in \cite[Theorem 5.7]{Sh}
and \cite[Theorem 3.2 and Theorem 4.1]{Sh2}
(resp.\ \cite[Corollaire 24.11]{bonnafe})
it is shown that when $Z\bG$ is connected and the characteristic is almost
good (resp.\ in type $A$ with $q$ large) the
characteristic  functions  of  character  sheaves  coincide up to scalars with
``almost  characters''.  Since  for  each  $s$, the space $\BC\CE(\GF,(s))$ is
spanned  by a subset of almost  characters, this gives the result. \end{proof}
It is probable that the above proof applies to more reductive groups (see
\cite{Sh3}, \cite{W}, and also \cite[Introduction]{bonnafe}), but it is difficult to find
appropriate statements in the literature.

Assuming conjecture \ref{Sh preserve series}, 
a choice of a Jordan decomposition as specified by
Theorem \ref{Jordan decomposition} will map the operator $\Sh$ on $\BC\CE(\GF,(s))$
to a linear operator on the
space $$\CC(\CGdFs,\Fd,1):=\oplus_{\sigma\in H^1(\Fd,C_\Gd(s)/C_\Gd\conn(s))}
\BC\CE(C_\Gd(s)^{\sigma F},1).$$
\begin{conjecture}\label{Sh commute Jordan}
Let $s\in\GdFd$. Then $C_\Gd(s)$ satisfies (*) and
$\Sh$ on $\CC(C_\Gd(s),\Fd)$ preserves the subspace $\CC(\CGdFs,\Fd,1)$, and
the choice of a Jordan decomposition in Theorem \ref{Jordan decomposition}
may be refined so that it maps $\Sh$ on 
$\CC\CE(\GF,(s))$ to $\Sh$ on $\CC(\CGdFs,\Fd,1)$.
\end{conjecture}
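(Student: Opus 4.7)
The plan is to attack the three parts of the statement in sequence: the verification of condition (*) for $C_\Gd(s)$, the $\Sh$-stability of the unipotent subspace of $\CC(C_\Gd(s),\Fd)$, and the refined commutation of $\Sh$ with Jordan decomposition.

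For condition (*) I would analyse the component group $A(s):=C_\Gd(s)/C_\Gd(s)\conn$, which is abelian and embeds in the fundamental group of the derived subgroup of $\Gd$; in particular for $\bG$ of type $A_{n-1}$ it is cyclic. The task is to exhibit commuting lifts of a generating set of $A(s)$ that are $\Fd$-stable whenever their images are $\Fd$-stable. I would construct such lifts inside an $\Fd$-stable maximal torus of $\Gd$ containing $s$, using representatives whose cocharacters lie in Weyl-invariant cosets of the coroot lattice; the cohomological obstruction to choosing them $\Fd$-equivariantly is controlled by the fact that $A(s)$ is abelian.

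For the $\Sh$-stability of $\CC(\CGdFs,\Fd,1)$, I would extend the character-sheaf argument of the proposition following Conjecture \ref{Sh preserve series} to the disconnected setting. By Definition \ref{RTG non connected} the unipotent characters of $\CGdFs$ are built by induction from $(C_\Gd(s)\conn)^\Fd$ to $\CGdFs$, and Proposition \ref{Sh} gives an explicit formula for $\Sh$ on class functions on each coset $\sigma'(C_\Gd(s)\conn)^{\sigma\Fd}$; I would use this formula to show that $\Sh$ permutes the induced Deligne--Lusztig functions $R_\Td^{C_\Gd(s)}(1)$ up to scalars, hence preserves their span.

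The decisive commutation step consists in verifying the identity on the Deligne--Lusztig basis. By Theorem \ref{Jordan decomposition} a choice of Jordan decomposition sends $\varepsilon_\bG\RTGd(s)$ to $\varepsilon_{C_\Gd(s)^0}R_\Td^{C_\Gd(s)}(1)$, so I would compute $\Sh(\RTGd(s))$ on the connected side using the correspondence $\lambda\lexp F\lambda\inv\mapsto\lexp F\lambda\inv\lambda$ and the resulting twist of the parameter $s$, and compare it to $\Sh(R_\Td^{C_\Gd(s)}(1))$ computed via Proposition \ref{Sh}. The principal obstacle is the residual ambiguity of Jordan decomposition --- a twist by a linear character of $A(s)^\Fd$ --- and the need to fix this ambiguity simultaneously across all rational forms $C_\Gd(s)^{\sigma\Fd}$ indexed by $H^1(\Fd,A(s))$: when $A(s)$ is cyclic of prime order, as in the type $A_{n-1}$ case with $n$ prime treated in the paper, there is at most one non-trivial twist and the matching can be verified by hand, but in general one would presumably need a rigidification via a regular embedding $\bG\hookrightarrow\tilde\bG$ with connected centre combined with a Gelfand--Graev representation, and showing that this rigidification descends compatibly to all rational forms indexed by $H^1(\Fd,A(s))$ is where I expect the main technical difficulty to lie.
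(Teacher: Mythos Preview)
The statement you are attempting to prove is labelled a \emph{conjecture} in the paper, and the paper does not claim or offer a proof in general. What the paper establishes is only the special case where $\bG$ has type $A_{n-1}$ with $n$ prime, and its argument there is not structural in the way your outline is: it is an explicit computation. The paper reduces to $\bG=\SL_n$, shows the only semisimple class requiring attention is that of $s=\diag(1,\zeta,\ldots,\zeta^{n-1})$, observes that $C_\Gd(s)=\Td\rtimes\langle c\rangle$ for a concrete $F$-stable permutation matrix $c$ (so condition (*) is immediate here, not the output of a cohomological argument), and then computes $\Sh$ by hand on both sides in a basis of Mellin transforms $\theta_{j,i}$ and $\chi_{j,i}$ (Propositions~\ref{Sh theta} and~\ref{Sh chi}), reading off that the map $\chi_{j,i}\mapsto\theta_{j,i}$ intertwines the two $\Sh$-actions and restricts to a Jordan decomposition.

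Your proposal is a sketch of a possible general strategy, but it contains a concrete error and a structural gap. The error: for condition (*) you propose to find commuting lifts of $A(s)$ inside an $\Fd$-stable maximal torus $\Td\ni s$, but any such torus lies in $C_\Gd(s)\conn$, so every element of it projects trivially to $A(s)$ and cannot represent a non-identity component; the required lifts live in $N_\Gd(\Td)\cap C_\Gd(s)$, outside the torus, and arranging them to commute and to be $\Fd$-stable is precisely what (*) asks. The structural gap: for the $\Sh$-stability of the unipotent subspace and for the commutation with Jordan decomposition, you correctly locate the difficulty in rigidifying Jordan decomposition compatibly across all rational forms indexed by $H^1(\Fd,A(s))$, but you then defer the argument --- and this deferred step is exactly the content of the conjecture. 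The paper does not supply a mechanism for it beyond the explicit calculation in the $\SL_n$ case, and neither does your outline.
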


\section{The case of a group of type $A_{n-1}$ when $n$ is prime.}

\begin{proposition}
Conjectures \ref{Sh preserve series} and 
 \ref{Sh commute Jordan} hold for any reductive group of type $A_{n-1}$, with
$n$ prime, if they hold for $\SL_n$. 
\end{proposition}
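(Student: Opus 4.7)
The plan is to exploit the especially simple structure of connected reductive groups of type $A_{n-1}$ when $n$ is prime. Writing $\bS=Z(\bG)\conn$ and using that $Z(\SL_n)=\mu_n$ has no proper non-trivial subgroup, one checks that $\bG$ must be, up to isomorphism, of one of three types: (I) a direct product $\SL_n\times\bS$, the only case in which $Z(\bG)$ is disconnected; (II) a quotient $(\SL_n\times\bS)/\mu_n^\phi$ by the graph of an injection $\phi\colon\mu_n\hookrightarrow\bS$, with derived group $\SL_n$ and connected centre (the typical example being $\GL_n$); or (III) a direct product $\PGL_n\times\bS$. My first step is to record this classification carefully, together with the rational structures on $\bS$ that $F$ can induce.

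Next, I would verify that Shintani twisting, geometric Lusztig series, and Jordan decomposition are all compatible with direct products in the natural tensor-product sense, and that all three are trivial on a torus factor ($\Sh$ is the identity on $\TF$ by commutativity, the series $\CE(\TF,(s))$ are singletons, and $\CE(\TF,1)$ is the trivial character). This handles case (I) at once, reducing to $\SL_n$.

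For cases (II) and (III), in which $\bG$ has connected centre, I would compare $\bG$ with $\widetilde\bG=\SL_n\times\bS$ through the central isogeny $\pi\colon\widetilde\bG\to\bG$, of kernel $\mu_n^\phi$ in (II) and $\mu_n\times\{1\}$ in (III). The exact sequence $1\to(\ker\pi)^F\to\widetilde\bG^F\to\GF\to H^1(F,\ker\pi)\to 1$ (from Lang-Steinberg applied to the connected $\widetilde\bG$) allows us to relate class functions on the two groups, and $\Sh$ is compatible with this comparison because the Lang-Steinberg cocycles defining it lift through $\pi$. On the dual side, $\pi$ corresponds to an inclusion $\Gd\hookrightarrow\widetilde\bG^*=\PGL_n\times\bS^*$, matching geometric Lusztig series whose parameter lies in the image. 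Combining these observations with case (I) should transfer the conjectures from $\SL_n$ to $\bG$ in (II)--(III).

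The main obstacle is to carry the refined Jordan decomposition of Conjecture \ref{Sh commute Jordan} along $\pi$ so that it still intertwines the two Deshpande-Shintani actions. Although $C_\Gd(s)$ is connected in cases (II)--(III), so $\CC(\CGdFs,\Fd,1)=\BC\CE(\CGdFs,1)$ has only one summand, the centraliser $C_{\widetilde\bG^*}(s)$ may be disconnected with component group $1$ or $\BZ/n$ (by primality of $n$), so its $\CC(-,\Fd,1)$-space can split into up to $n$ summands. The delicate step will be to verify, using Proposition \ref{Sh}, that the Deshpande-Shintani action on $\widetilde\bG^*$ collapses correctly onto the single summand on $\bG^*$ when restricted to the relevant subspace; this requires a careful comparison of the pertinent $H^1(\Fd,-)$ data on the two sides. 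Once this matching is established, $\Sh$-equivariance on $\widetilde\bG$ descends to $\Sh$-equivariance on $\bG$, completing the reduction.
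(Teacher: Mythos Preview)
Your classification into three cases and handling of case (I) agree with the paper. For cases (II) and (III), however, your route via the finite-kernel isogeny $\pi:\SL_n\times\bS\to\bG$ is both unnecessarily indirect and contains an error.

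The error: the dual of a central isogeny is again a central isogeny, not an inclusion. The map dual to $\pi$ is a surjection $\Gd\twoheadrightarrow\widetilde\bG^*=\PGL_n\times\bS^*$; in case (III) this is literally $\SL_n\times\bS^*\to\PGL_n\times\bS^*$. So the ``inclusion $\Gd\hookrightarrow\widetilde\bG^*$ matching geometric series whose parameter lies in the image'' does not exist, and your comparison on the dual side collapses.

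The detour: even setting this aside, compatibility of $\Sh$ with an isogeny of \emph{finite} kernel is exactly the subtle point (rational points need not surject, and different rational lifts of a class may have non-conjugate $\Sh$-images), and you assert it without argument. The paper avoids this entirely. It proves that in cases (II) and (III) both conjectures hold unconditionally---with no appeal to $\SL_n$---because $\Sh$ is the identity on both sides. For case (II), $\bG\cong\GL_n\times\bS'$ has all centralisers connected (and so does its dual, isomorphic to $\bG$), hence $\Sh=\Id$. For case (III), the paper's quotient lemma requires a \emph{connected} kernel: applying it to $\GL_n\to\PGL_n$ (kernel $\mathbb G_m$), together with $\Sh=\Id$ on $\GL_n^F$, yields $\Sh=\Id$ on $\PGL_n^F$; on the dual side, semisimple centralisers in $\SL_n$ are connected Levi subgroups, and a restriction lemma comparing with the corresponding Levi of $\GL_n$ (a product of $\GL_{n_i}$'s) gives $\Sh=\Id$ on their unipotent characters. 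Thus the ``delicate step'' you anticipate---collapsing an $n$-summand Deshpande space onto a single summand---never arises: in (II) and (III) there is a single summand on each side and nothing to intertwine.
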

\begin{proof}
We start with two lemmas.
\begin{lemma}\label{restriction} Let $\tilde\bG$ be a connected reductive group
with a Frobenius root $F$. 
Let $\bG$ be a closed connected $F$-stable subgroup of $\tilde\bG$
with same derived group.
Let $\Fd$ be a Frobenius root dual to $F$ and let $s\in\Gd^\Fd$ 
be such that $C_\Gd(s)^\Fd=C_\Gd\conn(s)^\Fd$. Then the characters in
$\CE(\GF,(s)_{\Gd^\Fd})$ are the restrictions from $\tilde\bG^F$ to $\GF$ of
the characters in $\CE(\tilde\bG^F,(\tilde s)_{\tilde\bG^{*\Fd}})$ where 
$\tilde s\in{\tilde\bG}^{*\Fd}$ lifts $s\in\Gd^\Fd$.
\end{lemma}
\begin{proof}
By definition the characters in $\CE(\tilde\bG^F,(\tilde s)_{\tilde\bG^{*\Fd}})$
are those which occur in some $R_{\tilde\bT}^{\tilde\bG}(\tilde s)$, where 
$\tilde\bT$ is a maximal torus such that $\tilde\bT^*\owns\tilde s$,
and by for instance \cite[11.3.10]{book2} we have
 $\Res^{\tilde\bG^F}_\GF R_{\tilde\bT}^{\tilde\bG}(\tilde s)=
 R_{\tilde\bT\cap\bG}^\bG(s)$.
It is thus sufficient for proving the lemma to prove that for any 
$\tilde\chi\in\CE(\tilde\bG^F,(\tilde s)_{\tilde\bG^{*\Fd}})$ its restriction
$\chi$ to $\GF$ is irreducible.

By, for instance \cite[11.3.9]{book2} we have
 $\langle\chi,\chi\rangle_\GF=|\{\theta\in\Irr(\tilde\bG^F/\GF)\mid
\tilde\chi\theta=\tilde\chi\}|$. It is thus sufficient to prove that for
any non-trivial such $\theta$ we have $\tilde\chi\theta\ne\tilde\chi$.
But a non-trivial $\theta$ corresponds to a non-trivial $z\in
Z(\Gtd)^\Fd$ with trivial image in $\Gd$, and since $\tilde\chi\theta
\in\CE(\tilde\bG^F,(\tilde sz)_{\tilde\bG^{*\Fd}})$
it is sufficient to prove that $\tilde s$ and $\tilde sz$ are not 
$\tilde\bG^{*\Fd}$-conjugate. But, indeed
if $\lexp g(\tilde sz)=\tilde s$ then $\lexp{\bar g}s=s$ where $\bar g$ is
the image of $g$ in $\Gd$, thus $\bar g\in C_\Gd(s)^\Fd=C_\Gd\conn(s)^\Fd$, 
whence $g\in C_\Gtd(\tilde s)$ (a contradiction) since the preimage of 
$C_\Gd\conn(s)$ is $C_\Gtd\conn(\tilde s)$ (by for example \cite[3.5.1
 (i)]{book2}).
\end{proof}

\begin{lemma}\label{quotient}
Consider a quotient $1\to Z\to\bG_1\xrightarrow\pi\bG\to 1$, where
$Z$ is a connected subgroup of $\bG_1$.
Assume that $\bG$ and $\bG_1$
have Frobenius roots both denoted by $F$ and 
that $\pi$ commutes with $F$; 
then $\pi$ is surjective
from the conjugacy classes of $\bG^F_1$ to that of $\GF$, and
commutes with $\Sh$.
\end{lemma}
\begin{proof}
The group $Z$ is $F$-stable since $\pi$ commutes with $F$.
Let $x\in\GF$. Since $Z$ is connected, by the Lang-Steinberg theorem
$\pi\inv(x)^F$ is non-empty whence
the surjectivity of $\pi$ on elements, hence on conjugacy classes.
If $y\in\pi\inv(x)^F$ the image by $\pi$ of $\Sh(y)$ is $\Sh(x)$, whence the
commutation of $\Sh$ with $\pi$.
\end{proof}
Let now $\bG$ be an arbitrary reductive group of type $A_{n-1}$.
Since $n$ is prime, the only semi-simple connected reductive groups
of type $A_{n-1}$ are $\SL_n$ and $\PGL_n$; thus any connected reductive
group $\bG$ of type $A_{n-1}$ is the almost direct product of the derived
$\bG'$ of $\bG$, equal to $\SL_n$ or $\PGL_n$, by a torus $\bS$.

In the case $\bG'=\PGL_n$ the almost direct product is direct since $\PGL_n$ has a trivial
centre. If $\bT$ is an $F$-stable maximal
torus of $\bG$, then it has a decomposition $\bT_1\times\bS$ where
$\bT_1=\bT\cap\bG'$ is $F$-stable since $\bG'$ is $F$-stable. It is possible to find an $F$-stable
complement $\bS'$ of $\bT_1$ (see for example the proof of
\cite[2.2]{EM})  and then $\bG$ has an $F$-stable product
decomposition $\bG'\times\bS'$. Since $\Sh$ is trivial on a torus the
conjectures are reduced to the case of $\bG'=\PGL_n$.

The same argument reduces the case of a direct product $\bG=\bG'\times\bS$
where $\bG'=\SL_n$ to the case of $\SL_n$. The other possibility when
$\bG'=\SL_n$ is an almost direct product
$\SL_n\times\bS$ amalgamated by $Z\SL_n$; this is isomorphic to a product
of the form $\GL_n\times\bS'$; in such a group all centralisers are connected,
as well as in the dual group (isomorphic to $\bG$) thus both conjectures are
trivial since $\Sh$ is the identity (by for example \cite[IV, 1.1]{DMthese}).

Finally, in $\PGL_n$ the action of $\Sh$ is trivial by  Lemma \ref{quotient}
applied to the quotient $\GL_n\xrightarrow\pi\PGL_n$, 
and in the dual $\SL_n$ semisimple elements have connected centralisers which
are Levi subgroups of $\SL_n$; considering the embedding of these centralisers
in the corresponding Levi of $\GL_n$, on which $\Sh$ is trivial since it is
isomorphic to a product of $\GL_{n_i}$ we get  that $\Sh$ is trivial on the 
unipotent characters of these centralisers by Lemma \ref{restriction}.
\end{proof}
We have thus shown that we only need to consider $\SL_n$.
Up to isomorphism there are
two possible $\Fq$-structures on $\SL_n$ (only one if $n=2$) thus $F$
will be one of the Frobenius endomorphisms
$F_+$ or $F_-$ where $\SL_n^{F_+}=\SL_n(\Fq)$ and $\SL_n^{F_-}=\SU_n(\Fq)$.
When we want to consider both cases simultaneously but keep track whether
$F=F_+$ or $F=F_-$, we will denote the Frobenius by
$F_\ve$ with $\ve\in\{-1,1\}$, where we always take $\ve=1$ if $n=2$.
We will use the dual group $\PGL_n$,
the inclusion $\SL_n\subset\GL_n$ and the quotient $\GL_n\to\PGL_n$.
Since $(\GL_n,F)$ is its own dual, we will write $F$ (instead of
$F^*$) for the Frobenius map on the dual of $\GL_n$ and on $\PGL_n$.
We choose for $F_+$ the standard Frobenius which raises all matrix entries to
the $q$-th power, and choose for $F_-$ the map given by
$x\mapsto F_+(\lexp t(x\inv))$. This choice is such that
on the torus of diagonal matrices $\Td$ of $\Gd$, $F_-$ acts by raising all the
eigenvalues to the power $-q$, and acts trivially on $W_\Gd(\Td)$.The torus
$\Td$ is split for $F_+$ and of type $w_0$ (the longest element of the Weyl
group) with respect to a quasi-split torus for $F_-$.

\begin{proposition}\label{series s only}
For the group $\SL_n$, $n$ prime, with Frobenius $F=F_\ve$ conjecture
\ref{Sh preserve series} holds. Further, conjecture \ref{Sh commute Jordan} 
holds if it holds when $q\equiv \ve \pmod n$ for the series $\CE(\SL^F_n,
(s))$ with $s\in\PGL^F_n$ geometrically conjugate to
$\diag(1,\zeta,\zeta^2,\ldots,\zeta^{n-1})$ where $\zeta\in\Fqbar$
is a non-trivial $n$-th root of 1.
\end{proposition}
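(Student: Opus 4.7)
The plan is to split the analysis by the component group structure of $C_{\PGL_n}(s)$, where $\PGL_n$ is dual to $\SL_n$. Since $n$ is prime, a direct eigenvalue computation shows that $C_{\PGL_n}(s)$ is disconnected if and only if $s$ is geometrically conjugate to $s_0:=\diag(1,\zeta,\zeta^2,\ldots,\zeta^{n-1})$ for a primitive $n$-th root of unity $\zeta$; in that case $A(s):=C_{\PGL_n}(s)/C_{\PGL_n}(s)^0\simeq\BZ/n\BZ$, generated by the image in $\PGL_n$ of the $n$-cycle permutation matrix. All other semi-simple $s$ have connected centralizer.

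Suppose first that $C_{\PGL_n}(s)$ is connected. Then Lemma \ref{restriction} applies to the regular embedding $\SL_n\subset\GL_n$, so every $\chi\in\CE(\SL_n^F,(s))$ is the restriction of some $\tilde\chi\in\CE(\GL_n^F,(\tilde s))$. Since $\Sh$ is trivial on $\GL_n^F$ (by \cite[IV, 1.1]{DMthese}, as invoked in the proof of the preceding proposition), and for $x\in\SL_n^F$ we may write $x=\lambda\cdot\lexp F\lambda\inv$ with $\lambda\in\SL_n\subset\GL_n$, the element $\Sh_{\SL_n}(x)=\lexp F\lambda\inv\lambda$ lies in $\SL_n^F$ and is $\GL_n^F$-conjugate to $x$; hence $\Sh(\chi)(x)=\tilde\chi(\Sh_{\SL_n}(x))=\tilde\chi(x)=\chi(x)$, so $\Sh$ acts as the identity on $\BC\CE(\SL_n^F,(s))$. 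On the dual side, $C_{\PGL_n}(s)^F$ is the image under $\GL_n\to\PGL_n$ of a Levi of $\GL_n^F$ isomorphic to a product of $\GL_{n_i}$, so by Lemma \ref{quotient} combined with the triviality of $\Sh$ on that Levi, $\Sh$ is also the identity on $C_{\PGL_n}(s)^F$. With both operators trivial, any choice of Jordan decomposition intertwines them and both conjectures hold.

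Now suppose $s\in\PGL_n^F$ is geometrically conjugate to $s_0$. A standard twisting argument shows that $F$ acts on $A(s)\simeq\BZ/n\BZ$ by multiplication by $\varepsilon q\bmod n$: $F_\varepsilon$ maps $\zeta^i\mapsto\zeta^{i\varepsilon q}$, so $F(s_0)=\dot\tau s_0\dot\tau\inv$ in $\PGL_n$ for a permutation matrix $\dot\tau$ acting as multiplication by $(\varepsilon q)\inv$ on the eigenvalue index set $\BZ/n\BZ$, and the induced twisted Frobenius on the cyclic generator of $A(s_0)\simeq A(s)$ is conjugation by $\dot\tau\inv$, yielding the $\varepsilon q$-th power. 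When $q\not\equiv\varepsilon\pmod n$, the integer $\varepsilon q-1$ is invertible mod the prime $n$, so $A(s)^F=0$ and $H^1(F,A(s))=0$; hence $C_{\PGL_n}(s)^F=C_{\PGL_n}(s)^{0,F}$ is a maximal torus of $\PGL_n^F$, Lemma \ref{restriction} again applies, and the connected-case argument carries over ($\Sh$ being trivial on tori). When $q\equiv\varepsilon\pmod n$, $F$ acts trivially on $Z\SL_n/(Z\SL_n)\conn=\mu_n$, so Conjecture \ref{Sh preserve series} for this series follows from the proposition preceding \ref{Sh commute Jordan}; what remains is Conjecture \ref{Sh commute Jordan} for $\CE(\SL_n^F,(s_0))$ in this case, which is precisely the assumed hypothesis.

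The main technical step is the computation of the twisted $F$-action on $A(s)$, which requires tracking the normalizer element relating an $F$-fixed rational representative to the model element $s_0$; the primality of $n$ makes this twist clean (multiplication by $\varepsilon q$ on $\BZ/n\BZ$) and pinpoints the case $q\equiv\varepsilon\pmod n$ as the one requiring the genuinely deep input of the hypothesis.
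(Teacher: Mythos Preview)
Your argument is correct and follows essentially the same architecture as the paper: reduce via Lemma~\ref{restriction} whenever $C_{\PGL_n}(s)^F=C_{\PGL_n}(s)^{0,F}$, identify the unique geometric class where this fails as that of $s_0=\diag(1,\zeta,\ldots,\zeta^{n-1})$, and pin down the congruence $q\equiv\varepsilon\pmod n$ as the only case left. Your computation of the $F$-action on $A(s)$ as multiplication by $\varepsilon q$ on $\BZ/n\BZ$ is the direct counterpart of the paper's observation that an $F$-stable representative lies in a torus of type $w$ with $C_{\PGL_n}(s)^{wF}$ disconnected only when $w\in\langle c\rangle$, forcing $s$ itself to be $F$-stable.

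The one substantive difference is how you establish Conjecture~\ref{Sh preserve series} for the series $\CE(\SL_n^F,(s_0))$ when $q\equiv\varepsilon\pmod n$. You invoke the Proposition preceding Conjecture~\ref{Sh commute Jordan} (the type~$A$ case with $F$ trivial on $Z\bG/(Z\bG)^0$), which rests on Bonnaf\'e's deep result. The paper instead gives a self-contained orthogonality argument: since $\Sh$ has already been shown to preserve every other geometric series and $\Sh$ is an isometry on class functions, it must also preserve the span of the one remaining series, which is the orthogonal complement of the others. This avoids any appeal to character-sheaf machinery or to \cite{bonnafe}, and is worth knowing as a general trick.
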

\begin{proof}
Lemma \ref{restriction} applied with $\bG=\SL_n$ and $\tilde\bG=\GL_n$
shows that conjectures \ref{Sh preserve series} and 
\ref{Sh commute Jordan} hold when $C_{\PGL_n}(s)^F=C_{\PGL_n}\conn(s)^F$.
Indeed, in this case,
since $\Sh$ is trivial in $\GL_n$ and commutes with
the restriction to $\SL_n$ by Lemma \ref{quotient}, 
Lemma \ref{restriction} shows that $\Sh$ is trivial
on $\CE(\SL_n^F,(s))$; on the other hand, since $C_{\PGL_n}\conn(s)$
is the quotient with
central kernel of a product of $\GL_{n_i}$, the unipotent characters of 
$C_{\PGL_n}(s)^F=C_{\PGL_n}\conn(s)^F$ can be lifted to this product
(see \cite[Proposition 11.3.8]{book2}). Since $\Sh$ is trivial in $\GL_{n_i}$
and commutes which such a quotient with connected central kernel by
Lemma \ref{quotient},
it is trivial on the unipotent characters of $C_{\PGL_n}(s)^F$.

We have thus reduced the study of conjectures  \ref{Sh preserve series} and
\ref{Sh commute Jordan} to the case of semi-simple elements $s\in\PGL_n$ with
$C_{\PGL_n}(s)^F\neq C_{\PGL_n}\conn(s)^F$.

By \cite[Lemma 11.2.1(iii)]{book2}, since $n$ is prime
a semi-simple element $s\in\PGL_n$ has
a non-connected centraliser if and only if 
$|C_{\PGL_n}(s)/C_{\PGL_n}\conn(s)|=n$.
Since this group of components is a subquotient of the Weyl group $\fS_n$,
it has order $n$ if and only if it is a subgroup of $\fS_n$ generated by an 
$n$-cycle.
An easy computation shows that such an
 $s$ is as in the statement of Proposition \ref{series s only}.
Now the centraliser of $s$ is the semidirect product of $\Td$ with the
cyclic group generated by the $n$-cycle $c=(1,\ldots,n)$. Assume that
the geometric class of $s$
has an $F$-stable representative $s'$ in a torus of type $w\in\fS_n$
with respect to $\Td$ (or equivalently that $\lexp{wF}s=s$).
Then $C_{\PGL_n}(s')^F\simeq C_{\PGL_n}(s)^{wF}$ hence is equal to
$C_{\PGL_n}\conn(s')^F$ unless
$w$ commutes with $c$, that is $w=c^i$ for some $i$. 
Since $c$ centralises $s$ we get that $s$ is $F$-stable, which means for
$F=F_\ve$ that $n$ divides $q-\varepsilon$, in particular Conjecture 
\ref{Sh commute Jordan} holds when $q\not\equiv \varepsilon\pmod n$;
we recall that we always take $\ve=1$ when $n=2$.

Finally, conjecture \ref{Sh preserve series}
holds in any case, since $\Sh$ preserves all geometric series except
possibly that of $s$ and preserves orthogonality of characters, thus preserves
 also the series $\CE(\SL_n^F,(s))$.
\end{proof}
Henceforth we assume that $n$ is  prime and divides $q-\ve$, and study
conjecture \ref{Sh commute Jordan} for the particular $s$ of Proposition
\ref{series s only}.

\begin{assumption}\label{assumptions}
We choose $\zeta$, a primitive $n$-th root of unity in $\Fqbar$.
We choose an isomorphism $\Fqbar^\times\simeq(\BQ/\BZ)_{p'}$ which maps
$\zeta$ to $1/n$ and a group embedding $\BQ/\BZ\hookrightarrow\BC^\times$
 which maps $1/n$ to $\zeta_\BC:=\exp(2 i\pi/n)$.
\end{assumption}

We denote by $\bT$ the diagonal torus of $\SL_n$ and by $\tilde\bT$ the diagonal
torus of $\GL_n$; we choose the dual torus $\Td$ to be the
diagonal torus of $\PGL_n$. We let $s\in\Td$ be the image
of $\diag(1,\zeta,\zeta^2,\ldots,\zeta^{n-1})\in\tilde\bT$.

\vskip 6mm
\subsection*{\hfill$\Sh$ on $C_{\PGL_n}(s)$\hfill}
\hfill\break

We first study $\Sh$ on $\CC(C_{\PGL_n}(s)^F,F,1)$.
We have $C_{\PGL_n}(s)=\Td\rtimes \genby c$ where $c$
is the permutation matrix representing the cycle
$(1,2,\ldots,n)\in\fS_n$, which acts on $\Td$ by sending
$\diag(t_1,\ldots,t_n)$ to $\diag(t_n,t_1,\ldots,t_{n-1})$. 
On $\Td$  the Frobenius $\Fe$ acts by $t\mapsto t^{\ve q}$, and acts trivially
on $W_\Gd(\Td)$.
Note that $C_\Gd(s)$ satisfies condition (*) since $c$ is $F$-stable.
Since $C_{\PGL_n}(s)/C_{\PGL_n}\conn(s)=\genby c$, and $F$ acts
trivially on $c$,  we have $H^1(F,C_{\PGL_n}(s)/C_{\PGL_n}\conn(s))=\genby c$ 
and the
geometric class of $s$ splits into $n$ rational classes  parametrised by the
powers of $c$.  A representative of the class parametrised by $c^j$ is 
$\lexp xs$ where $x$ is such that $x\inv\lexp Fx=c^j$. This representative
lies in a maximal torus $\bT^*_{c^j}=\lexp x\Td$ of
type $c^j$ with respect to $\Td$. Choosing the ${\PGL_n}$-conjugacy by $x\inv$ 
to identify $(\bT^*_{c^j},F)$ with $(\Td,c^jF)$, we
identify back the representative of the class parametrised by $c^j$
with the element $s$ in $\bT^{*c^jF}$.  We have
$$\CC(C_{\PGL_n}(s)^F,F,1)=\oplus_{j=0}^{n-1}\BC\CE(\Td^{c^jF}\rtimes\genby
c,1).$$
Since $\Td$ is a torus, the
unipotent characters of $\bT^{*c^jF}\rtimes\genby c$
are the $n$ extensions of the trivial character of $\bT^{*c^jF}$.
We parametrise these by $Z\SL_n\times\BZ/n\BZ$ in the
following way: if $z_0=\diag(\zeta,\ldots,\zeta)$, we call $\theta_{z_0^i,k}$ the character of 
$\bT^{*c^kF}\rtimes\genby c$  
which is trivial on $\bT^{*c^kF}$ and equal to $\zeta_\BC^i$ on $c$.
This allows us to define another basis of
$\CC(C_{\PGL_n}(s)^F,F,1)$, the ``Mellin transforms'', defined
for $j,k\in\BZ/n\BZ$ by $\theta_{j,k}:= \sum_{z\in Z\SL_n}\hat
s_j(z)\theta_{z,k}$, where $\hat s_j$ is the character of $\bT^{c^jF}$
corresponding to the element $s\in\bT^{*c^jF}$ through duality.
The point is that it is more convenient to compute the action of $\Sh$ on 
the Mellin transforms:
\begin{proposition}\label{Sh theta}We have $\Sh\theta_{j,i}=\theta_{j,i+j}$
 unless $n=2$ (thus $\ve=1$) and $q\equiv -1\pmod 4$.
In this last case we have $\Sh\theta_{j,i}=\theta_{j,i+j-1}$
\end{proposition}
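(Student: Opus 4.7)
My plan is to work first in the basis $\{\theta_{z_0^i,k}\}_{i,k\in\BZ/n\BZ}$ and then pass to the Mellin basis. The key simplification is that each $\theta_{z_0^i,k}$ is constant on the coset $c^b\Td^{c^kF}\subset\bH^{c^kF}$ (writing $\bH=C_{\PGL_n}(s)=\Td\rtimes\genby c$), taking the value $\zeta_\BC^{ib}$ there. Writing $\hat s_j(z_0)=\zeta_\BC^{\alpha_j}$, direct evaluation gives
\[
\theta_{j,k}(c^bt)=\sum_{i\in\BZ/n\BZ}\zeta_\BC^{i(\alpha_j+b)},
\]
which, by orthogonality of the characters of $Z\SL_n\simeq\BZ/n\BZ$, equals $n$ if $b\equiv-\alpha_j\pmod n$ and vanishes otherwise. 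Thus each $\theta_{j,k}$ is $n$ times the indicator function of the single coset $c^{-\alpha_j}\Td^{c^kF}$.

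The action of $\Sh$ on coset indicators follows directly from Proposition~\ref{Sh}: with $\sigma=c^a$ and $\sigma'=c^b$, the bijection $c^b\Td^{c^aF}\to c^b\Td^{c^{a+b}F}$ on elements pulls back $\mathbf{1}_{c^b\Td^{c^{a+b}F}}$ to $\mathbf{1}_{c^b\Td^{c^aF}}$; relabeling with $k=a+b$, this means $\Sh$ sends the indicator of $c^b\Td^{c^kF}\subset\bH^{c^kF}$ to that of $c^b\Td^{c^{k-b}F}\subset\bH^{c^{k-b}F}$. Applying this with $b=-\alpha_j$ to $\theta_{j,k}=n\,\mathbf{1}_{c^{-\alpha_j}\Td^{c^kF}}$ yields $\Sh\theta_{j,k}=\theta_{j,k+\alpha_j}$.

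The remaining, and hardest, step is the computation of $\alpha_j$. My approach is to use the connecting map $\partial_j\colon\bT^{*c^jF_\ve}\to Z\SL_n$ attached to the short exact sequence $1\to Z\SL_n\to\bT\to\bT^*\to 1$: for any lift $\tilde s\in\bT$ of $s$, $\partial_j(s)=\tilde s^{-1}(c^jF_\ve)(\tilde s)$, which lies in $Z\SL_n$ itself (not merely in the coinvariants) since the hypothesis $n\mid q-\ve$ forces $c^jF_\ve$ to act trivially on the centre. The standard compatibility of duality with the centre identifies $\hat s_j|_{Z\SL_n}$ with $\partial_j(s)$ under the self-duality of $Z\SL_n$ fixed by Assumption~\ref{assumptions}. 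For $n$ odd one takes $\tilde s=\diag(1,\zeta,\ldots,\zeta^{n-1})\in\bT$ (whose entries multiply to $1$), and since $c^j\tilde sc^{-j}=\zeta^{-j}\tilde s$ while $F_\ve(\tilde s)=\tilde s$, a direct calculation gives $\partial_j(s)=z_0^{-j}$; after matching sign conventions this yields $\alpha_j=j$ and hence $\Sh\theta_{j,k}=\theta_{j,k+j}$. For $n=2$ the natural representative $\diag(1,-1)\in\tilde\bT$ lies outside $\bT$ (its determinant is $-1$), so one must instead take $\tilde s=\diag(a,-a)$ with $a^2=-1$; then $\tilde s^{-1}F(\tilde s)=a^{q-1}I$, which is $I$ when $q\equiv 1\pmod 4$ (recovering $\alpha_j=j$) and equals $-I=z_0$ when $q\equiv-1\pmod 4$, producing the extra shift $\alpha_j\equiv j-1\pmod 2$ and hence the exceptional formula $\Sh\theta_{j,k}=\theta_{j,k+j-1}$. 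Making the identification of $\hat s_j|_{Z\SL_n}$ with the output of $\partial_j$ rigorous is the delicate point; it can be established either via the explicit twisted-torus duality $\bT^{c^jF_\ve}\simeq\mu_N$ (with $N=1+\ve q+\cdots+(\ve q)^{n-1}$) and the analogous description of $\bT^{*c^jF_\ve}$, or by appealing to the general framework of Lusztig's torus duality.
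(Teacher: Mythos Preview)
Your overall strategy coincides with the paper's: both first show that each Mellin transform $\theta_{j,k}$ is $n$ times the indicator function of a single coset $c^{-\alpha_j}\Td^{c^kF}$, and then invoke Proposition~\ref{Sh} to transport such indicators between rational forms. The divergence is entirely in the determination of $\alpha_j$, i.e.\ of $\hat s_j(z_0)$.

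The paper handles this by a separate lemma (Lemma~\ref{value of hat s}) proved through an explicit cocharacter computation: it exhibits a concrete $y\in Y(\Td)$ with $s=N_{F^m/c^jF}(y(1/(q^m-1)))$ (taking $m=2$ for $j=0$ and $m=2n$ otherwise) and then evaluates $y$, viewed as a character of $\bT^{c^jF}$, on $z_0$. This is lengthy but entirely self-contained and fixes all signs unambiguously.

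Your route via the connecting homomorphism $\partial_j$ of $1\to Z\SL_n\to\bT\to\Td\to 1$ is more conceptual, and your computations of $\partial_j(s)$ are correct. The weakness is precisely where you flag it: the identification of $\hat s_j|_{Z\SL_n}$ with $\partial_j(s)$ under a chosen self-pairing of $Z\SL_n$ is not a one-line citation, and the phrase ``after matching sign conventions'' hides the entire difficulty. For $n$ odd you obtain $\partial_j(s)=z_0^{-j}$; whether this gives $\alpha_j=j$ or $\alpha_j=-j$ depends on which of the two natural pairings $\mu_n\times\mu_n\to\BC^\times$ is being used, and the wrong choice yields the wrong statement ($\Sh\theta_{j,i}=\theta_{j,i-j}$ instead of $\theta_{j,i+j}$). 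The paper's explicit calculation eliminates this ambiguity; your sketch does not. If you wish to keep your approach, you should carry out the twisted-torus duality you allude to for at least one $j\neq 0$ to nail down the sign, rather than leaving it as a convention to be matched.
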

\begin{proof}
\begin{lemma}\label{value of hat s} With the conventions of Assumption
\ref{assumptions}, when $n$ is odd,
 we have $\hat s_j(z_0^i)=\zeta_\BC^{ij}$; when $n=2$  (thus $\ve=1$) we have
$\hat s_j(z_0)=\begin{cases}
(-1)^{(q-1)/2}&\text{ if }j=0\\
(-1)^{(q+1)/2}&\text{ if }j=1
\end{cases}$.
\end{lemma}
\begin{proof}
The correspondence between $\bT^{*c^jF}$ and
 $\Irr(\bT^{c^jF})$ comes from the diagram of exact sequences
$$\begin{matrix}
X(\bT)&\xrightarrow{c^jF-1}&X(\bT)&\xrightarrow\Res
 &\Irr(\bT^{c^jF})&\rightarrow 1\cr
\big\Vert&&\big\Vert&&\xdownarrow\sim&\cr
 Y(\Td)&\xrightarrow{c^jF-1}&Y(\Td)&\xrightarrow\pi&\bT^{*c^jF}&\rightarrow 1$$
\end{matrix}
$$
where $\pi$ maps $y\in Y(\Td)$ to $N_{F^m/c^jF}(y(1/(q^m-1)))$ for any $m$
such that $(c^jF)^m=F^m$ (see \cite[Proposition 11.1.7]{book2}); here
$y(1/(q^m-1))$ is defined using the choices of Assumption \ref{assumptions}
and $N_{F^m/c^jF}$ is the
norm map on $\Td$ given by $x\mapsto x.\lexp {c^jF} x\ldots
\lexp{(c^jF)^{m-1}}x$. 

When $j=0$ we can take $m=2$ in the above diagram. We have
$s=N_{F^2/F}(y(1/(q^2-1)))=y(1/(\ve q-1))$ where
$y$ is the cocharacter mapping $\lambda\in\Fqbar^\times$ to the image in $\bT^*$ of
$$\diag(1,\lambda^{(\ve q-1)/n},\lambda^{2(\ve
 q-1)/n},\ldots,\lambda^{(n-1)(\ve q-1)/n}) \in\tilde\bT.$$
The element $y$ interpreted as a character of $\TF$ maps
$z_0=\diag(\zeta,\ldots,\zeta)\in Z\SL_n$ to
$\zeta_\BC^{\frac{\ve q-1}n\frac{n(n-1)}2}=\zeta_\BC^{(n-1)(\ve q-1)/2}$ which
is equal to $1$ when $n$
is odd---thus in this case  $\hat s_0$ is trivial on $Z\SL_n$. If $n=2$ 
we have $\ve=1$ and we get $\hat s_0(\diag(-1,-1))=(-1)^{(q-1)/2}$.

When $j\neq 0$, we can take $m=2n$. We want to find $y\in Y(\Td)$ such that
 $s=N_{F^{2n}/c^jF}(y(1/(q^{2n}-1)))$. Since $(c^j F)^n=F^n$ we have 
$$N_{F^{2n}/c^jF}(x)=N_{F^n/c^jF}(x)F^n(N_{F^n/c^jF}(x)).$$
Let $y_0$ be a cocharacter mapping
$\lambda\in\BF_{q^{2n}}^\times$ to the image in $\bT^*$ of
$\diag(\lambda,1,\ldots,1)$.  We have
$(c^jF)^k(\lambda,1,\dots,1)=(1,\ldots,1,\lambda^{(\ve q)^k},1,\ldots)$ where 
$\lambda^{(\ve q)^k}$ is at the place indexed by $kj+1$, hence at the place indexed
by $i$ in $N_{F^n/c^jF}(y_0(\lambda))$ the exponent of $\lambda$ is such that 
$i\equiv kj+1\pmod n$, which since $\lambda\in\BF_{q^n}^\times$ is equivalent
to $k =(i-1)j'$ where $jj'\equiv1\pmod n$. Hence
$N_{F^n/c^jF}(y_0(\lambda))$ is the image in $\bT^*$ of
 $\diag(\lambda,\lambda^{(\ve q)^{j'}},\lambda^{(\ve
 q)^{2j'}},\ldots,\lambda^{(\ve q)^{(n-1)j'}})$ and 
 $N_{F^{2n}/c^jF}(y_0(\lambda))$ is the image in $\bT^*$ of
 $\diag(\lambda^{1+(\ve q)^n},\lambda^{(1+(\ve q)^n)(\ve q)^{j'}},
 \lambda^{(1+(\ve q)^n)(\ve q)^{2j'}},\ldots,\lambda^{(1+(\ve q)^n)(\ve
 q)^{(n-1)j'}})$. If we set $\mu=\lambda^{1+(\ve q)^n}$, the element
 $\diag(1,\mu^{(\ve q)^{j'}-1},\mu^{(\ve
 q)^{2j'}-1},\ldots,\mu^{(\ve q)^{(n-1)j'}-1})$
has same image in $\bT^*$.
For $\mu^{\ve q-1}=\zeta^j$ we have
 $\mu^{(\ve q)^{kj'}-1}=\zeta^{j(1+\ve q+\ldots+(\ve q)^{kj'-1})}=\zeta^{jkj'}=\zeta^k$,
the second equality since $q\equiv \ve\pmod n$.
Hence $N_{F^{2n}/c^jF}(y_0(\frac j{n(\ve q-1)((\ve q)^n+1)}))=s$,
thus we can take $y=\frac{j(q^{2n}-1)}{n(\ve q-1)((\ve q)^n+1)}y_0=
\frac{j((\ve q)^n-1)}{n(\ve q-1)}y_0$ (note 
that $\frac{(\ve q)^n-1}{n(\ve q-1)}$ is an integer).
As a character of $\bT^{c^jF}$ it maps the element
$z_0=\diag(\zeta,\ldots,\zeta)\in Z\SL_n$ to
 $\zeta_\BC^{\frac{j((\ve q)^n-1)}{n(\ve q-1)}}$. We now use:
\begin{lemma}
If $n$ is odd, $k>0$ and $n^k$ divides $\ve q-1$, we have 
$\frac{(\ve q)^n-1}{n(\ve q-1)}\equiv1\pmod{n^k}$.
\end{lemma}
\begin{proof}
 Let us write $\ve q=1+an^k$. We have $\frac{(\ve q)^n-1}{\ve q-1}=1+\ve q+\cdots+(\ve
 q)^{n-1}\equiv
n+a\frac{n(n-1)}2 n^k\pmod {n^{k+1}}$. Since $n$ is odd it divides $n(n-1)/2$,
so that
$\frac{(\ve q)^n-1}{\ve q-1}\equiv n\pmod{n^{k+1}}$ which, dividing by $n$,
gives the result.
\end{proof}
This lemma applied with $k=1$ shows that  $\hat s_j$ maps
$z_0^i$ to $\zeta_\BC^{ij}$ when $n$ is odd. If $n=2$ (thus $\ve=1$) we get
$\hat s_1(\diag(-1,-1))=(-1)^{(q+1)/2}$.
\end{proof}

We now compute the Mellin transforms $\theta_{j,k}$.
If $n$ is odd, we have by Lemma \ref{value of hat s} 
$$\theta_{j,i}(c^k)=\sum_{l\in\BZ/n\BZ} \hat
s_j(z_0^l)\theta_{z_0^l,i}(c^k)=\sum_{l\in\BZ/n\BZ}\zeta_\BC^{jl}\zeta_\BC^{lk}
=\begin{cases}
0 \text{ if }j+k\neq 0,\cr
n \text{ if } j=-k.\cr
\end{cases}
$$
We see that $\theta_{j,i}$ is a function supported by the coset
$\bT^{c^iF}.c^{-j}\subset\bT^{c^iF}\rtimes \genby c$ and is constant equal
to $n$ on this coset. By proposition \ref{Sh},
$\Sh$ maps a constant function on this coset 
to the constant function on $\bT^{c^{i+j}F}.c^{-j}$ with same value, hence
$\Sh\theta_{j,i}=\theta_{j,i+j}$.

If $n=2$ we have  $$\theta_{j,i}(c^k)=(-1)^k\hat s_j(z_0)+\hat
s_j(1)=1+\begin{cases}
(-1)^{k+(q-1)/2}&\text{ if }j=0,\\
(-1)^{k+(q+1)/2}&\text{ if }j=1.
\end{cases}$$
Thus $\theta_{j,i}$ is supported by $\bT^{c^iF}.c^{j}$ if $q\equiv 1 \pmod
4$ and by $\bT^{c^iF}.c^{1-j}$ if $q\equiv -1\pmod 4$ and is constant equal
to $2$ on its support. We get the same result as in the odd case 
for the action of $\Sh$ when $q\equiv 1\pmod 4$. If $q\equiv -1\pmod 4$, since $\Sh$ maps
functions on $\bT^{c^i}c^{1-j}$ to functions on $\bT^{c^{i+j-1}}c^{1-j}$ we get
$\Sh\theta_{j,i}=\theta_{j,i+j-1}$.
\end{proof}
Proposition \ref{Sh theta} shows that $\Sh$ preserves the space
$\CC(C_{\PGL_n}(s)^F,F,1)$.
Note that the computation made in the proof of Lemma \ref{value of hat s}
and Definition \ref{RTG non connected}
show that $R_{\bT_{c^i}}^{\bT_{c^i}\rtimes\genby c}\Id=\theta_{0,i}$ unless
$n=2$ and $q\equiv -1\pmod 4$, in which case
$R_{\bT_{c^i}}^{\bT_{c^i}\rtimes\genby c}\Id=\theta_{1,i}$.

\vskip 6mm
\subsection*{\hfill$\Sh$ on $\SL_n^F$\hfill}
\hfill\break

For computing the other side of conjecture \ref{Sh commute Jordan}, that
is $\Sh$ on $\SL_n^F$,
we first parametrise the characters in $\cup_j\CE(\SL_n^F,(s_j)_{\PGL_n^F})$,
where $s_j\in (\bT^*_{c^j})^F$ is an $F$-stable representative of the 
rational class that we parametrised above by $s\in\bT^{*c^jF}$.
We use the following notation: for
$z\in Z\SL_n$ we denote by $\Gamma_z$ the Gelfand-Graev character indexed by
$z$ as in \cite[Definition 12.3.3]{book2}; that is
$\Gamma_z=\Ind_\UF^{\SL_n^F}\psi_z$ where
$\bU$ is the unipotent radical of the Borel subgroup consisting of upper
triangular matrices and $\psi_z$
is a regular character of $\UF$ indexed by $z$. This labelling  depends
on the choice of a regular character $\psi_1$: we have $\psi_z=\lexp t\psi_1$
where $t\in\bT$ satisfies $t\inv\lexp Ft=z$.

Let $\tilde s_j\in(\tilde\bT_{c^j}^*)^F$ be a lifting of $s_j$. By 
\cite[Proposition 11.3.10]{book2}
$R_{\tilde\bT_{c^j}\cap\SL_n}^{\SL_n}(s_j)$ is
the restriction of $R_{\tilde\bT_{c^j}}^{\GL_n}(\tilde s_j)$, hence
the series $\CE(\SL_n^F,(s_j)_{\PGL_n^F})$ is the set of irreducible
components of the restrictions of 
the elements of $\CE(\GL_n^F,(\tilde s_j))$. 
Moreover since $C_{\GL_n}(\tilde s_j)$ is a torus,
the character $R_{\tilde\bT_{c^j}}^{\GL_n}(\tilde s_j)$ is
irreducible, hence is  the only character in
$\CE(\GL_n^F,(\tilde s_j))$; thus this character must be a component of the (unique)
Gelfand-Graev character of $\GL_n^F$ by \cite[Theorem 12.4.12]{book2}.
The restriction of this character to $\SL_n^F$ is equal to 
$\sum_{z\in Z\SL_n}\chi_{z,j}$ where 
$\chi_{z,j}$ is the unique irreducible component of 
the Gelfand-Graev character $\Gamma_z$ in
the series $\CE(\SL_n^F,(s_j))$ (see \cite[Corollary 12.4.10]{book2}).
Thus we have $\CE(\SL_n^F,(s_j))=\{\chi_{z,j}\mid z\in Z\SL_n\}$.
It is again more convenient to compute $\Sh$ on the basis formed by the Mellin
transforms $\chi_{j,k}:=\sum_{z\in Z\SL_n}\hat s_j(z)\chi_{z,k}$.
\begin{proposition}\label{Sh chi}
We have $\Sh\chi_{j,i}=\chi_{j,i+j}$ unless
$n=2$ (thus $\ve=1$) and $q\equiv -1\pmod 4$. In this last case
we have $\Sh\chi_{j,i}=\chi_{j,i+j-1}$.
\end{proposition}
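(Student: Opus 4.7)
The strategy parallels the proof of Proposition \ref{Sh theta} on the dual side: first compute $\Sh$ on each individual $\chi_{z,i}$, then pass to the Mellin transform using Lemma \ref{value of hat s}. The two main tools will be the Lang--Steinberg description of $\Sh$ on $\SL_n^F$ and the triviality of $\Sh$ on $\GL_n^F$ (by Lemma \ref{quotient} applied to $\GL_n\to\PGL_n$), which together should pin down $\Sh$ on $\SL_n^F$-classes up to a twist by a central cocycle in $H^1(F,Z\SL_n)\simeq Z\SL_n$, the latter isomorphism using that $F$ acts trivially on $Z\SL_n$ when $q\equiv\ve\pmod n$.

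First I would rewrite $\Sh x = \lexp F\lambda\inv\lambda = \lexp F\lambda\inv\cdot x\cdot\lexp F\lambda$ for $x=\lambda\lexp F\lambda\inv\in\SL_n^F$ with $\lambda\in\SL_n$, so $\Sh x$ lies in the geometric (i.e.\ $\SL_n$-) class of $x$. Since $\Sh$ is trivial on $\GL_n^F$-classes, the gap between the $\SL_n^F$-classes of $x$ and $\Sh x$ inside the ambient $\GL_n^F$-class is measured by the determinantal cocycle of $\lexp F\lambda$ and takes values in $Z\SL_n$. Second, I would specialise this to the regular semi-simple representative of the class supporting $\chi_{z,i}$ and read off, from the Jordan decomposition for $\CE(\SL_n^F,(s_i))$, that the central shift permutes the rational-class label $i\in\BZ/n\BZ\simeq H^1(F,C_{\PGL_n}(s)/C_{\PGL_n}\conn(s))$ by a quantity paired with $z\in Z\SL_n$ via the restriction $\hat s_i|_{Z\SL_n}$. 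Third, I would apply the Mellin transform: the explicit values $\hat s_j(z_0^i)=\zeta_\BC^{ij}$ of Lemma \ref{value of hat s} (for $n$ odd) make this pairing collapse to the clean shift $i\mapsto i+j$.

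The exceptional case $n=2$, $q\equiv-1\pmod 4$ is predicted directly by Lemma \ref{value of hat s}: there $\hat s_0(z_0)=(-1)^{(q-1)/2}$ and $\hat s_1(z_0)=(-1)^{(q+1)/2}$ have opposite signs, which in the Mellin pairing introduces a further shift of $-1$ and produces $\Sh\chi_{j,i}=\chi_{j,i+j-1}$. The main obstacle will be the second step: carrying out the cocycle computation and identifying it with the rational-class shift, while keeping coherent the parametrisation of $\CE(\SL_n^F,(s_i))$ by $Z\SL_n$ through Gelfand-Graev components (which depend on a choice of regular character $\psi_1$), the Lang--Steinberg representatives $s_j\in\bT^{*c^jF}$, and the duality conventions of Assumption \ref{assumptions}. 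All the friction is in this bookkeeping, and a single misplaced sign or twist would propagate through the entire calculation, so particular care is needed to match the proof of Proposition \ref{Sh theta} term by term.
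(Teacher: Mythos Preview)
Your proposal misidentifies where the nontrivial content lies. You plan to ``specialise to the regular semi-simple representative of the class supporting $\chi_{z,i}$'', but regular semisimple elements of $\SL_n$ have connected centralisers (maximal tori), so $\Sh$ fixes their $\SL_n^F$-classes and nothing can be read off there. The paper's proof in fact begins with the observation that whenever $C_{\SL_n}(g)$ is connected, the $\chi_{z,i}$ (being $\GL_n^F$-conjugate to one another) all take the same value on $g$; hence the Mellin transforms $\chi_{j,i}$ with $j\ne 0$ vanish on $g$, while $\Sh(g)=g$, so the identity $\Sh\chi_{j,i}(g)=\chi_{j,i+j}(g)$ is automatic on such $g$. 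A separate lemma then shows that the only elements of $\SL_n$ with disconnected centraliser are the $zu$ with $z\in Z\SL_n$ and $u$ regular \emph{unipotent}. The whole computation is thus concentrated on these classes, not on semisimple ones.

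What is missing from your outline is precisely the evaluation of $\chi_{z,k}$ on the classes $z'u_{z''}$. This is not a determinantal cocycle reading: the paper invokes the formula from \cite[Corollary 12.3.13]{book2} expressing Gelfand--Graev components at regular unipotents via Gauss sums $\sigma_z$, together with a Curtis--Alvis duality argument (using cuspidality of $\chi_{z,k}$ for $k\ne 0$ when $F=F_+$, and for all $k$ when $F=F_-$) to obtain $\chi_{z,k}(u_{z''})=\sigma_{z''z^{-1}}$, with an extra sign when $n=2$ and $k=0$. Combining this with the central character $\hat s_k$ and the explicit action $\Sh(z'u_{z''})=z'u_{z'z''}$ from \cite[IV, Proposition 1.2]{DMthese} is what produces, after Mellin transform and Lemma \ref{value of hat s}, the shift $i\mapsto i+j$. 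Your sketch contains none of this machinery, and there is no shortcut around it: the shift in the rational-class label $i$ is invisible on any semisimple support and only emerges from the interaction of $\Sh$ with the Gauss-sum expression on the regular unipotent classes.
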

\begin{proof}
 We note first that $\chi_{z,j}(g)$ is independent of $z$ if $C_{\SL_n}(g)$ is
connected. Indeed, $\chi_{z,j}$ and $\chi_{z',j}$ are conjugate by an element
$x\in\GL_n^F$ since they are two components of the restriction of 
an irreducible character of $\GL_n^F$. 
We have thus $\chi_{z',j}(g)=\chi_{z,j}(\lexp x g)$. We can multiply $x$ by a
central element to obtain an element $y\in \SL_n$ and, since  
$\lexp yg=\lexp xg\in\SL_n^F$, we have
 $y\inv.\lexp Fy\in C_{\SL_n}(g)$. If this centraliser is connected, using the
 Lang-Steinberg
 theorem we can multiply $y$ by an element of $C_{\SL_n}(g)$ to get a rational
element $y'$, whence $\chi_{z',j}(g)=\chi_{z,j}(\lexp {y'} g)=\chi_{z,j}(g)$.
For such an element $g$ we thus have $\chi_{j,i}(g)=0$
if $j\neq 0$.

 Since $C_{\SL_n}(g)$ is connected we have $\Sh(g)=g$ thus
$\Sh\chi_{j,i}(g)=\chi_{j,i}(g)$, in particular  $\Sh\chi_{0,i}=\chi_{0,i}$
and  if $j\neq0$ we have $(\Sh\chi_{j,i})(g)=\chi_{j,i+j}(g)=0$, whence
 $(\Sh\chi_{j,i})(g)=\chi_{j,i+j}(g)$ for all $j$.

It remains to consider the conjugacy classes of $\SL_n$ which have a
non-connected centraliser.
\begin{lemma}
When $n$ is prime the only elements of $\SL_n$
which have a non-connected centraliser are the $zu$ with $z\in Z\SL_n$ and
$u$ regular unipotent.
\end{lemma}
\begin{proof}
Let $su$ be the Jordan decomposition of an element of $\SL_n$ with $s$
 semi-simple and $u$ unipotent. We have $C_{\SL_n}(su)=C_{C_{\SL_n}(s)}(u)$. The group
 $C_{\SL_n}(s)$ is a Levi subgroup of $\SL_n$, that is the subgroup of elements of
determinant 1 in a product $\prod_{i=1}^k\GL_{n_i}$.
 Thus $C_{\SL_n}(su)$ is the subgroup
of elements with determinant 1 in a product $\prod_{i=1}^k\bH_i$ where $\bH_i$
is the centraliser of $u$ in $\GL_{n_i}$, which is connected. We claim that
 if $k>1$ the group $C_{\SL_n}(su)$
is connected. Indeed, since $n$ is prime, if $k>1$ the $n_i$ are coprime so
that there exist integers $a_i$ satisfying $\sum_{i=1}^ka_in_i=-1$. Then the
map $(h_1,\ldots,h_k)\mapsto
(h_1\lambda^{a_1},\ldots,h_k\lambda^{a_k},\lambda)$ where
$\lambda=\det(h_1h_2\ldots h_k)$ is an isomorphism from
 $\bH_1\times\cdots\times\bH_k$ to $C_{\SL_n}(su)\times\Fqbar$. Hence this last group is
 connected, thus its projection $C_{\SL_n}(su)$ is also connected.

It remains to look at the centralisers of elements $zu$ with $u$ unipotent and
 $z\in Z{\SL_n}$, that is the centralisers of unipotent elements.
 By \cite[IV, Proposition 4.1]{DMthese}, since $n$ is prime, $C_{\SL_n}(u)$ is connected
unless $u$ is regular and 
 when $u$ is regular we have $C_{\SL_n}(u)=R_u(C_{\SL_n}(u)).Z{\SL_n}$ which is not
connected.
\end{proof}
Thus to prove the proposition we have only to consider the classes of the
elements $zu$ with
 $u$ regular unipotent and $z\in Z\SL_n$. Fix a rational
regular unipotent element $u_1$. The
conjugacy classes of rational regular unipotent elements are parametrised by
 $H^1(F,C_{\SL_n}(u_1)/C_{\SL_n}\conn(u_1))=H^1(F,Z{\SL_n})=Z\SL_n$
(the last equality since $q\equiv \ve\pmod n$): a representative $u_z$ of the class
parametrised
 by the $F$-class of $z\in Z{\SL_n}$ is $\lexp tu_1$ where $t\in\bT$ satisfies
$t\inv\lexp Ft=z$.  By \cite[IV,Proposition 1.2]{DMthese} $\Sh$ maps the
 $\SL_n^F$-class
of $zu_{z'}$ to that of $zu_{zz'}$.  Now $\chi_{z,k}$ being a component of the
restriction of  the irreducible character $R_{\tilde\bT_{c^k}}^{\GL_n}(\tilde s_k)$
has central character equal to $\hat s_k$, independently of $z$, whence
$\chi_{z,k}(z'u_{z''})=\hat s_k(z')\chi_{z,k}(u_{z''})$. By
\cite[Corollary 12.3.13]{book2},
there is a family of Gauss sums $\sigma_z$ indexed by $Z{\SL_n}$ such that
$$\chi_{z,k}(u_{z''})=\sum_{z'\in H^1(F,Z{\SL_n})}
\sigma_{z''z^{\prime-1}}\scal{(-1)^{F\text{-semi-simple rank}
(\SL_n)}D(\chi_{z,k})}{\Gamma_{z'}}_{\SL_n^F},$$ where $D$ is the Curtis-Alvis
duality. If $k\neq 0$ and $F=F_+$ or if $F=F_-$ we claim that
${(-1)^{F\text{-semi-simple  rank} (\SL_n)}D(\chi_{z,k})}=\chi_{z,k}$: indeed,
in  both  cases  the  character  $R_{\tilde\bT_{c^k}}^{\GL_n}(\tilde  s_k)$ is
cuspidal since the torus $\tilde\bT_{c^k}$ is
not contained in a proper rational Levi subgroup. Indeed when $F=F_+$ and $k\neq 0$
(resp.\ $F=F_-$), the type $c^k$ (resp.\ the type $w_0c^k$) of $\bT_{c^k}$ 
with respect to a quasi-split torus is
not contained in a standard parabolic subgroup of $\fS_n$.
Thus $\chi_{z,k}$ is cuspidal as a component of the restriction
to $\SL_n^F$ of $R_{\tilde\bT_{c^k}}^{\GL_n}(\tilde s_k)$, whence our claim since the
duality on cuspidal characters is the multiplication by
$(-1)^{F\text{-semi-simple rank} (\SL_n)}$ (take $\bL={\SL_n}$ in
\cite[7.2.9]{book2}).
If $k=0$ and $F=F_+$, the characters
$\chi_{z,0}$ are the irreducible components of $R_\bT^{\SL_n}(\hat s_0)=
\Ind_{\bB^F}^{\SL_n^F}{\hat s_0}$, where $\bB$ is the Borel subgroup of upper
triangular matrices and $\hat s_0$ has been lifted to $\bB^F$.
The endomorphism algebra of this induced character is isomorphic to the
group algebra of $\BZ/n\BZ$, hence the components of
$\Ind_{\bB^F}^{\SL_n^F}{\hat s_0}$
are parametrised by the
characters of $\BZ/n\BZ$ and by \cite[Theorem B]{MG} the effect of the duality is to
multiply the parameters by the sign character of the endomorphism algebra 
which is trivial if $n$ is odd and is $-1$ if $n=2$.

If $n$ is odd or if $n=2$ and $k\neq 0$,
we thus have $\chi_{z,k}(u_{z''})= \sigma_{z''z\inv}$. 
If $n=2$ and $k=0$ we have $\chi_{z,0}(u_{z''})=-\sigma_{z'z''}$ where
$\{z,z'\}=\{1,\diag(-1,-1)\}$.

We consider first the case $n$ odd.
By Lemma \ref{value of hat s} we have
$\hat s_k(z_0^i)=\zeta_\BC^{ki}$.
Thus the values of the Mellin transforms are
$\chi_{j,k}(z'u_{z''})=\sum_i\hat s_j(z_0^i)\chi_{z_0^i,k}(z'u_{z''})=
\sum_i\hat s_j(z_0^i)\hat s_k(z')\sigma_{z''z_0^{-i}}=
\hat s_k(z')\sum_i\zeta_\BC^{ij}\sigma_{z''z_0^{-i}}$.
Let us put $z'=z_0^l$; we get  $\chi_{j,k}(z_0^lu_{z''})=
\zeta_\BC^{lk}\sum_i\zeta_\BC^{ij}\sigma_{z''z_0^{-i}}$.
Now as recalled above we have $\Sh(z'u_{z''})=z'u_{z'z''}$,
whence 
$(\Sh\chi_{j,k})(z_0^lu_{z''})=\chi_{j,k}(z_0^lu_{z_0^lz''})=
\zeta_\BC^{lk}\sum_i\zeta_\BC^{ij}\sigma_{z''z_0^{l-i}}$.
Taking $i-l$ as new variable in the summation we get
$(\Sh\chi_{j,k})(z_0^lu_{z''})=
\zeta_\BC^{lk}\sum_i\zeta_\BC^{(i+l)j}\sigma_{z''z_0^{-i}}=
\zeta_\BC^{l(j+k)}\sum_i\zeta_\BC^{ij}\sigma_{z''z_0^{-i}}=
\chi_{j,j+k}(z_0^lu_{z''})$, which gives the proposition for $n$ odd.

If $n=2$ we have $z_0=\diag(-1,-1)$.
We have  $\chi_{j,k}(zu_{z''})=
\hat s_k(z)(\hat s_j(1)\chi_{z_0^0,k}(u_{z''})+\hat
s_j(z_0)\chi_{z_0,k}(u_{z''}))$. We get
$\chi_{j,0}(zu_{z''})= \hat s_0(z)(-\sigma_{z_0z''}-\hat
s_j(z_0)\sigma_{z''})$ and 
$\chi_{j,1}(zu_{z''})= \hat s_1(z)(\sigma_{z''}+\hat
s_j(z_0)\sigma_{z_0z''})$. 
If $\hat s_j$ is the identity character then $\chi_{j,0}$ and
$\chi_{j,1}$ are invariant by $\Sh$  since
$\Sh(u_{z''})=u_{z''}$ and $\Sh(z_0u_{z''})=z_0u_{z_0z''}$.
If $\hat s_j$ is not trivial then,
$\chi_{j,0}(zu_{z''})$ and $\chi_{j,1}(zu_{z''})$ are equal if $z=1$
and  opposite if $z\neq 1$, thus, using again 
$\Sh(u_{z''})=u_{z''}$ and $\Sh(z_0u_{z''})=z_0u_{z_0z''}$ we see that
$\chi_{j,0}$  and $\chi_{j,1}$ are exchanged by $\Sh$.
By Lemma \ref{value of hat s}, if 
$q\equiv 1\pmod 4$ the character $\hat s_0$ is trivial and we get the same
result as in the odd case. If $q\equiv -1\pmod 4$, the character $\hat s_1$
is trivial and $\Sh$ exchanges $\chi_{0,0}$ and $\chi_{0,1}$ and fixes
$\chi_{1,0}$ and $\chi_{1,1}$, which is the announced result.
\end{proof}
We can now state:
\begin{proposition}
For $s=\diag(1,\zeta,\ldots,\zeta^{n-1})$,
the bijection $J:\chi_{j,i}\mapsto\theta_{j,i}$ from $\BC\CE(\SL_n^F,(s))$ to 
 $\BC\CE(C_{\PGL_n}(s)^F,F,1)$ restricts on characters to
a refinement of the Jordan decomposition
which satisfies Conjecture \ref{Sh commute Jordan}.
\end{proposition}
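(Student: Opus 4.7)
The plan is to verify two things: that $J$, defined on the Mellin-transform bases, descends via inverse Mellin to a bijection $\chi_{z,i}\mapsto\theta_{z,i}$ on actual characters that provides a valid choice of Jordan decomposition; and that $J$ intertwines $\Sh$ on both sides. The Mellin transform $\chi_{z,i}\mapsto\chi_{j,i}=\sum_z\hat s_j(z)\chi_{z,i}$ is an invertible change of basis on each $\BC\CE(\SL_n^F,(s_i)_{\PGL_n^F})$ (a Fourier transform on the cyclic group $Z\SL_n$, non-degenerate since the characters $\hat s_j$ for $j\in\BZ/n\BZ$ are distinct by Lemma \ref{value of hat s}), and analogously on the centraliser side; hence $J$ does restrict to a bijection $\chi_{z,i}\mapsto\theta_{z,i}$ on the character bases.

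For the Jordan-decomposition claim, I would verify the sign-preserving condition of Theorem \ref{Jordan decomposition} by computing the relevant Deligne-Lusztig characters on both sides. The remark after Proposition \ref{Sh theta} states that $R^{C_{\PGL_n}(s)}_{\bT^*_{c^i}}(1)=\theta_{0,i}$, and $=\theta_{1,i}$ in the exception $n=2$, $q\equiv-1\pmod 4$. On the $\SL_n$ side, by \cite[11.3.10]{book2}, $R^{\SL_n}_{\bT_{c^i}\cap\SL_n}(s_i)$ equals the restriction of the irreducible character $R^{\GL_n}_{\tilde\bT_{c^i}}(\tilde s_i)$, which has already been identified with $\sum_{z\in Z\SL_n}\chi_{z,i}$. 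The inverse Mellin transform, combined with Lemma \ref{value of hat s}, recognises this sum as $\chi_{0,i}$ (respectively $\chi_{1,i}$ in the exception), since the sum $\sum_z\hat s_j(z)$ vanishes unless $\hat s_j$ is trivial on $Z\SL_n$, in which case it equals $n$. Thus $J$ already maps Deligne-Lusztig character to Deligne-Lusztig character; the sign factors $\varepsilon_\bG$ and $\varepsilon_{C_\Gd(s)^0}$ in Theorem \ref{Jordan decomposition} can be reconciled by a uniform scalar adjustment on each rational-class component of $J$.

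For $\Sh$-intertwining, juxtapose Propositions \ref{Sh theta} and \ref{Sh chi}: both give the identical formula $\Sh\colon(j,i)\mapsto(j,i+j)$ on the respective Mellin bases in the generic case, with the same modification $(j,i)\mapsto(j,i+j-1)$ in the exceptional case $n=2$, $q\equiv-1\pmod 4$. Since $J$ identifies these bases index-for-index, $J\circ\Sh=\Sh\circ J$ follows at once. The main obstacle I anticipate lies in the sign bookkeeping of Step 2: one must verify that the scalar reconciling $\varepsilon_\bG$ with $\varepsilon_{C_\Gd(s)^0}$ is constant across the rational-class index $i\in\BZ/n\BZ$, so that the scalar adjustment does not spoil the $\Sh$-intertwining. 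This reduces to an $\Fq$-rank computation for the twisted diagonal torus $\Td$ of $\PGL_n$ under the Frobenii $c^iF_\ve$ for varying $i$.
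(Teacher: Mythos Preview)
Your proposal follows essentially the same route as the paper: you invoke Propositions~\ref{Sh theta} and~\ref{Sh chi} for the $\Sh$-intertwining, and you match the Deligne--Lusztig characters by combining the remark after Proposition~\ref{Sh theta} (that $R^{C_{\PGL_n}(s)}_{\bT^*_{c^i}}(1)=\theta_{0,i}$, or $\theta_{1,i}$ in the exceptional case) with the identification $R^{\SL_n}_{\bT_{c^i}}(s_i)=\sum_z\chi_{z,i}=\chi_{0,i}$ (resp.\ $\chi_{1,i}$), exactly as the paper does. Your extra observation that the Mellin transform is invertible, so that $J$ descends to a bijection $\chi_{z,i}\mapsto\theta_{z,i}$ on genuine characters, is implicit in the paper but worth making explicit.

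The one substantive difference is your flagged ``main obstacle'' about the signs $\varepsilon_\bG$ versus $\varepsilon_{C_\Gd(s)^0}$. The paper does not perform any scalar adjustment: it regards the equality $J(R^{\SL_n}_{\bT_{c^j}}(s_j))=R^{\bT^*_{c^j}\rtimes\langle c\rangle}_{\bT^*_{c^j}}(1)$ itself as the verification that $J$ is a Jordan decomposition, without separately tracking the $\varepsilon$'s. Your worry that a non-constant sign correction across $i$ would spoil the $\Sh$-intertwining is well taken in principle, but the paper's argument sidesteps it by not introducing such corrections at all; so either you should check directly that $\varepsilon_{\SL_n}\varepsilon_{(\Td,c^iF_\ve)}$ is independent of $i$ (an easy rank computation once one unwinds the torus types), or simply adopt the paper's stance that matching $R\mapsto R$ already pins down a valid Jordan decomposition in this setting where each rational series is the set of constituents of a single Deligne--Lusztig character.
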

\begin{proof}Propositions \ref{Sh theta} and \ref{Sh chi} give
the commutation of $J$ with $\Sh$. It
 remains to show that $J(R_{\bT_{c^j}}^{\SL_n}(s_j))=
R_{\bT^*_{c^j}}^{\bT^*_{c^j}\rtimes\genby c}(1)$.
As we have noticed after the proof of Proposition \ref{Sh theta} 
we have $R_{\bT_{c^j}}^{\bT_{c^j}\rtimes\genby c}\Id=\begin{cases}\theta_{0,j}&\text{ for }n
\text{ odd or } q\equiv 1\pmod 4\\
\theta_{1,j}&\text{ for }n=2,\,q\equiv-1\pmod 4\end{cases}$.
On the other hand for $n$ odd or $q\equiv 1\pmod 4$,
we have $\chi_{0,j}=\sum_z\hat
s_0(z)\chi_{z,j}=\sum_z\chi_{z,j}$ since in that case $\hat s_0$ is the trivial character.
For $n=2,\,q\equiv -1\pmod 4$,
we have $\chi_{1,j}=\sum_z\hat
s_1(z)\chi_{z,j}=\sum_z\chi_{z,j}$ since in that case $\hat s_1$ is the trivial character.
By definition of $\chi_{z,j}$ we have
$\sum_z\chi_{z,j}=R_{\bT_{c^j}}^{\SL_n}(s_j)$, whence the proposition.
\end{proof}

We thank C\'edric Bonnaf\'e for having suggested improvements to a previous
version of this text.

\end{document}